\newtheorem{theorem}{Theorem}[section]
\newtheorem{cor}[theorem]{Corollary}
\newtheorem{lem}[theorem]{Lemma}
 \newtheorem{defn}{Definition}[section]
 \newtheorem{notn}{Notation}[section]
\newcommand{\R}{\mathbb{R}}
\newcommand{\N}{\mathbb{N}}
\title{Semilinear degenerate elliptic equation in the presence of singular nonlinearity}
\author{Kaushik Bal and Sanjit Biswas }
\begin{document}

\maketitle

\section*{ABSTRACT}
Given $\Omega(\subseteq\R^{1+m})$, a smooth bounded domain and a nonnegative measurable function $f$ defined on $\Omega$ with suitable summability. In this paper, we will study the existence and regularity of solutions to the quasilinear degenerate elliptic equation with a singular nonlinearity given by:
\begin{align}
    -\Delta_\lambda u&=\frac{f}{u^{\nu}} \text{ in }\Omega\nonumber\\
   &u>0 \text{ in } \Omega\nonumber\\
   &u=0 \text{ on } \partial\Omega\nonumber
\end{align}
where the operator $\Delta_\lambda$ is given by $$\Delta_\lambda{u}=u_{xx}+|x|^{2\lambda}\Delta_y{u};\,(x,y)\in \R\times\R^m $$
is known as the Grushin operator.

\tableofcontents
\begin{center}
     \section{INTRODUCTION}\label{sec1}
\end{center}
In this paper, we are interested in the semilinear elliptic problem, whose model is given by 
 \begin{align}\label{maineq}
    -\Delta_\lambda u&=\frac{f}{u^\nu} \text{ in }\Omega\\
   &u>0 \text{ in } \Omega\\
   &u=0 \text{ on } \partial\Omega
\end{align}
where the operator $\Delta_\lambda$ is given by $$\Delta_\lambda{u}=u_{xx}+|x|^{2\lambda}\Delta_y{u};\;\lambda\geq 0$$
is known as the Grushin operator. $\Delta_y$ denotes the Laplacian operator w.r.t $y$ variable. $\Omega\subseteq\R^{1+m}$ is a $\Lambda-$connected bounded open set (definition provided in the next section) and  $X=(x,y)\in\Omega$, $x\in\R$, $y=(y_1,y_2,...,y_m)\in\R^m$, $m\geq1$. Here $\nu>0$ is a positive real number, and $f$ is a nonnegative measurable function lying in some Lebesgue space.\\
To understand the context of our study, we start by looking at available literature concerning (\ref{maineq}). Starting with the now classical work by Crandall et al. \cite{CRT} where the case $\lambda=0$ was considered and showed to have a unique solution in $C^2(\Omega)\cap C(\bar{\Omega})$ such that the solution behaves like some power of the distance function near the boundary, a plethora of work followed provided $f\in C^{\alpha}(\Omega)$. Of particular significance is the work of Lazer-Mckenna, where the solution was shown to exist in $H_0^1(\Omega)$ if and only if $0<\delta<3$. When $f\in L^1(\Omega)$, Boccardo and Orsina \cite{BLO} proved if $0<\nu\leq 1$ then there exist a solution of (\ref{maineq}) in $H^1_0(\Omega)$ and for $\nu>1$ there exist a solution $u\in H^1_{loc}(\Omega)$ such that $u^\frac{\nu+1}{2}\in H^1_0(\Omega)$ among other regularity results. The p-laplacian case was settled by \cite{CST}, where existence, uniqueness, and some regularity results were proved.

In this paper, we would like to relook at the equation (\ref{maineq}) by replacing the Laplacian with a degenerate elliptic equation whose prototype is given by Grushin Laplacian $\Delta_\lambda$. We will prove the existence and regularity results analog to \cite{BLO}. It is worth pointing out that there are several issues when degeneracy is introduced. If the distance between the domain $\Omega$ and the plane $x=0$ is positive, then the Grushin operator will become uniformly elliptic in $\Omega$, and in this case, the problem is settled in \cite{BLO}. We assume the domain $\Omega$ intersects the $x=0$ plane, thus degenerating the operator in $\Omega$. To handle this kind of degeneracy, assuming that $\Delta_\lambda$ admits a uniformly elliptic direction, we discuss the solvability of (\ref{maineq}) in the weighted degenerate Sobolev space $H^{1,\lambda}(\Omega)$ which is defined in \cite{FS, FL1}. We would also need to have a notion of convergence of sequence in the space $H^{1,\lambda}(\Omega)$ for which Monticelli-Payne \cite{MP} introduced the concept of a quasi-gradient, hence providing a proper representation of elements of $H^{1,\lambda}(\Omega)$. Another issue is the lack of availability of the Strong Maximum Principle, which we showed to hold using weak Harnack inequality of Franchi-Lanconelli \cite[Theorem 4.3]{FL2} valid for
$d-$metric on $\Omega$ provided $\lambda\geq 1$ and assuming that $\Omega$ is $\Lambda-$connected (definition is provided in the next section). We conclude our study with a brief discussion of how singular variable exponent for Grushin Laplacian may be handled, whose Laplacian counterpart can be found in Garain-Mukherjee \cite{PT}. For further reading into the topic, one may look at the papers \cite{BBG, BG1, BG, BGM, MD} and the references therein. \\
\begin{notn} 
Throughout the paper, if not explicitly stated, $C$ will denote a positive real number depending only on $\Omega$ and $N$, whose value may change from line to line. We denote by $\langle.,.\rangle$ the Euclidean inner product on $\R^n$ and denote by $|A|:=\sup_{|\xi|=1}\langle A\xi,\xi\rangle$ the norm of a real, symmetric $N\times N$ matrix $A$. The Lebesgue measure of $S\subset \R^N$ is denoted by $|S|$. The H\"{o}lder conjugate of $r\geq 1$ is denoted by $r'$.
 \end{notn}
This paper is organized into seven sections. Section \ref{sec2} discusses functional, analytical settings related to our problem and a few related results. We state our main results in section \ref{sec3}. Section \ref{sec4} and \ref{sec5} are devoted to proving a few auxiliary results. We prove our main results in section \ref{sec6}. Finally, in section \ref{sec7}, we consider the variable singular exponent case.

\begin{center}
\section{PRELIMINARIES AND FEW USEFUL RESULTS} \label{sec2}   
\end{center}
We define a few crucial notions, and the metric introduced in Franchi-Lanconelli \cite{FL2}.

 \begin{defn}
  An open subset $\Omega(\subset\R^N)$ is said to be $\Lambda-$connected if for every $X,Y\in\Omega$, there exists a continuous curve lying in $\Omega$ which is piecewise an integral curve of the vector fields  $\pm \partial_x,\pm|x|^{\lambda}\partial_{y_1},...,\pm|x|^{\lambda}\partial_{y_m}$ connecting $X$ and $Y$.
 \end{defn}
  Note that every $\Lambda-$connected open set in $\R^N$ is connected. We denote by $P(\Lambda)$ the set of all continuous curves which are piecewise integral curves of the vector fields $\pm \partial_x,\pm|x|^{\lambda}\partial_{y_1},...,\pm|x|^{\lambda}\partial_{y_m}$. Let $\gamma:[0,T]\to\Omega$ is an element in $P(\Lambda)$ and define $l(\gamma)=T$.
  \begin{defn}
  Let $X,Y\in\Omega$, we define a new metric $d$ on $\Omega$ by $d(X,Y)=\inf\{l(\gamma):\gamma\in P(\Lambda)$ connecting $X$ and $Y$\}.
  \end{defn}
 The $d-$ball around $X\in\Omega$ with radius $r>0$ is denoted by $S_d(X,r)$ and is given by $S_d(X,r)=\{Y\in\Omega:d(X, Y)<r$\}. (\cite[\text{Proposition 2.9}]{FL1}) ensures that the usual metric is equivalent to the $d$ in $\Omega$.

Let $N=k+m$ and $\Omega\subseteq\R^N$ be a bounded domain. Let $A=\left(\begin{array}{cc}
    I_k  &  O\\
    O  & |x|^{2\lambda}I_m
 \end{array}\right)$ and define the set $$V_A(\Omega)=\{u\in C^1(\Omega) |  \int_\Omega |u|^p\,dX + \int_\Omega \langle A\nabla u,\nabla u\rangle^\frac{p}{2}\,dX <\infty\}$$
 Consider the normed linear spaces $(V_A(\Omega),\|.\|)$ and $(C^1_0(\Omega),\|.\|_0)$ where $$\|u\|=(\int_\Omega |u|^p\,dX + \int_\Omega \langle A\nabla u,\nabla u\rangle^\frac{p}{2}\,dX)^\frac{1}{p}$$ and $$\|u\|_0=(\int_\Omega \langle A\nabla u,\nabla u\rangle^\frac{p}{2}\,dX)^\frac{1}{p}$$
Now $W^{1,\lambda,p}(\Omega)$ and $W^{1,\lambda,p}_0(\Omega)$ is defined as the completion of $(V_A(\Omega),\|.\|)$ and $(C^1_0(\Omega),\|.\|_0)$ respectively. Each element $[\{u_n\}]$, of the Banach space $W^{1,\lambda,p}(\Omega)$ is a class of Cauchy sequence in $(V_A(\Omega),\|.\|)$ and $\|[\{u_n\}]\|=\lim_{n\to\infty}\|u_n\|$. A function $u$ is said to be in $W^{1,\lambda,p}_{loc}(\Omega)$ if and only if $u\in W^{1,\lambda,p}(\Omega')$ for every $\Omega'\Subset\Omega$. For more information, one can look into Monticelli-Payne \cite{MP}.\\
  The following theorem proves that $\|.\|_0$ and $\|.\|$ are equivalent norm on $W^{1,\lambda,p}_0(\Omega)$.
 \begin{theorem}(\text{Poincar\'{e} Inequality})(Monticelli-Payne \cite [\text{ Theorem 2.1}] {MP})
  Let $\Omega\subset \R^N$ be a bounded domain, and $A$ is given as above. Then for any $1\leq p<\infty$ there exists a constant $C_p=C(N,p,\|A\|_\infty,d(\Omega))>0$ such that $$\|u\|_{L^p(\Omega)}^p
  \leq C_p\int_\Omega \langle A\nabla u,\nabla u\rangle^\frac{p}{2}\; dX\;\mbox{ for all $u\in C^1_0(\Omega)$}$$\\
  where $d(\Omega)$ denotes the diameter of $\Omega$.
 \end{theorem}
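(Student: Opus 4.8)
The plan is to prove the Poincaré inequality by reducing it to a one-dimensional integration in the first variable $x$, which is the uniformly elliptic direction of the Grushin operator. The key observation is that in the quadratic form $\langle A\nabla u,\nabla u\rangle = |\partial_x u|^2 + |x|^{2\lambda}|\nabla_y u|^2$, the pure $x$-derivative appears with a constant (non-degenerate) coefficient. Since $\Omega$ is bounded, it is contained in a slab $\{(x,y): |x| < R\}$ where $R \le d(\Omega)$ (or is comparable to the diameter); extending $u \in C^1_0(\Omega)$ by zero to this slab, for each fixed $y$ the function $x \mapsto u(x,y)$ has compact support in an interval of length at most $2R$.

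First I would write, for fixed $y$ and any $x$ in the slab, $u(x,y) = \int_{-R}^{x} \partial_s u(s,y)\,ds$, using that $u$ vanishes for $x$ near $-R$. By Hölder's inequality (with exponents $p$ and $p'$) applied to this integral, $|u(x,y)|^p \le (2R)^{p-1}\int_{-R}^{R} |\partial_s u(s,y)|^p\,ds$. Then I would integrate this bound in $x$ over $(-R,R)$ to pick up another factor of $2R$, and integrate in $y$ over $\R^m$, obtaining
\begin{equation}
\int_\Omega |u|^p\,dX \le (2R)^p \int_\Omega |\partial_x u|^p\,dX. \nonumber
\end{equation}
Since $|\partial_x u|^p \le \big(|\partial_x u|^2 + |x|^{2\lambda}|\nabla_y u|^2\big)^{p/2} = \langle A\nabla u,\nabla u\rangle^{p/2}$ pointwise, this immediately gives the claimed inequality with $C_p = (2R)^p$, which depends only on $N$, $p$, and the diameter (and trivially on $\|A\|_\infty$, which can be absorbed), exactly as asserted. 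The role of $\|A\|_\infty$ in the stated constant is mild here because we only used the lower bound on the $x$-block; in a more symmetric treatment, or if one wanted to exploit all directions, it would enter genuinely.

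The main (and really only) obstacle is a careful treatment of the geometry: one must justify that $\Omega$ fits inside a slab whose width is controlled by $d(\Omega)$, or more precisely by the Euclidean diameter, and that the extension-by-zero of $u \in C^1_0(\Omega)$ to the slab remains admissible for the fundamental theorem of calculus slicewise (this is standard since $u$ has compact support strictly inside $\Omega$). A secondary point is bookkeeping the constant so that it genuinely has the claimed dependence $C(N,p,\|A\|_\infty,d(\Omega))$; since the $d$-metric and Euclidean metric are equivalent on $\Omega$ by \cite[Proposition 2.9]{FL1}, the slab width can be taken comparable to $d(\Omega)$, closing the argument. Density of $C^1_0(\Omega)$ then extends the inequality to all of $W^{1,\lambda,p}_0(\Omega)$, giving the equivalence of $\|\cdot\|_0$ and $\|\cdot\|$.
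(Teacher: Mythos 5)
Your slicing argument is correct: since $A$ contains the identity block in the $x$-directions, the bound $|\partial_x u|^p\le\langle A\nabla u,\nabla u\rangle^{p/2}$ together with the one-dimensional fundamental theorem of calculus and H\"older in the non-degenerate direction gives the inequality with constant $(2R)^p$ controlled by the diameter. Note that the paper itself does not prove this statement but simply cites Monticelli--Payne, and your proof is essentially the standard argument used there (Poincar\'e in a uniformly elliptic direction), so there is nothing genuinely different to compare. Two small tidying points: take the slab as $\{a<x<b\}$ with $b-a\le d(\Omega)$ rather than a symmetric one (you cannot translate in $x$ because of the weight $|x|^{2\lambda}$, though this is harmless since only the unweighted $\partial_x$ term is used), and the appeal to the equivalence of the Franchi--Lanconelli metric with the Euclidean one is unnecessary, since $d(\Omega)$ in the statement is just the Euclidean diameter.
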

  Now the suitable representation of an element of $W^{1,\lambda,p}(\Omega)$ and $W^{1,\lambda,p}_0(\Omega)$ is given by the following theorem, whose proof follows exactly that of Monticelli-Payne where it is done for $p=2$.
  \begin{theorem}(Monticelli-Payne \cite [\text{ Theorem 2.1}] {MP}))\label{REP}
   Let $\Omega \subset \R^N$ be a bounded open set, and $A$ is given as above. Then for every $[\{u_n\}]\in W^{1,\lambda,p}(\Omega)$ there exists unique $u\in L^p(\Omega)$ and $U\in (L^p(\Omega))^N$ such that the following properties hold
   \begin{enumerate}[label=(\roman*)]
       \item $u_n\rightarrow u$ in $L^p(\Omega)$ and $\sqrt A \nabla u_n\rightarrow U$ in $(L^p(\Omega))^N$.
       \item $\sqrt{A}^{-1}U$ is the weak gradient of $u$ in each of the component of $\Omega\setminus\Sigma$
       \item If $|[\sqrt{A}]^{-1}|\in L^{p'}(\Omega)$ then $[\sqrt{A}]^{-1}U$ is the weak gradient of $u$ in $\Omega$.
       \item One has $$ \|[{u_n}]\|^p=\|u\|_{L^p(\Omega)}^p+\|U\|_{(L^p(\Omega))^N}^p$$
   \end{enumerate}
   where $\Sigma=\{X\in\Omega : \text{det}[A(X)]=0\}$, $p'=\frac{p}{p-1}$.
  \end{theorem}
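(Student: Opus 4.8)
The plan is to mimic, for general $p$, the $p=2$ argument of Monticelli–Payne; the four assertions come out in the order (i)$+$(iv), (ii), (iii).

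\emph{Extracting the limits and (iv).} Fix a Cauchy sequence $\{u_n\}\subset V_A(\Omega)$ representing $[\{u_n\}]$. Since $\langle A\nabla v,\nabla v\rangle^{1/2}=|\sqrt A\,\nabla v|$ pointwise, the definition of the norm gives $\|v\|^p=\|v\|_{L^p(\Omega)}^p+\|\sqrt A\,\nabla v\|_{(L^p(\Omega))^N}^p$, so $\|u_n-u_m\|^p\to 0$ forces both $\{u_n\}$ Cauchy in $L^p(\Omega)$ and $\{\sqrt A\,\nabla u_n\}$ Cauchy in $(L^p(\Omega))^N$. By completeness there exist $u\in L^p(\Omega)$ and $U\in(L^p(\Omega))^N$ with $u_n\to u$ and $\sqrt A\,\nabla u_n\to U$. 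If $\{v_n\}$ is another representative then $\|u_n-v_n\|\to0$ shows $v_n\to u$ and $\sqrt A\,\nabla v_n\to U$ as well, so $(u,U)$ depends only on the class; this is (i). Then $\|[\{u_n\}]\|^p=\lim_n\|u_n\|^p=\lim_n\bigl(\|u_n\|_{L^p(\Omega)}^p+\|\sqrt A\,\nabla u_n\|_{(L^p(\Omega))^N}^p\bigr)=\|u\|_{L^p(\Omega)}^p+\|U\|_{(L^p(\Omega))^N}^p$ by continuity of the norms, which is (iv).

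\emph{Weak gradient off the degeneracy set.} Let $\omega$ be a connected component of $\Omega\setminus\Sigma$ and $\omega'\Subset\omega$. On $\omega'$ one has $|x|\ge c_0>0$, hence $\langle A\xi,\xi\rangle\ge c_1|\xi|^2$ with $c_1=\min(1,c_0^{2\lambda})$, so $\sqrt A$ and $\sqrt A^{-1}$ are bounded there. From $\sqrt A\,\nabla u_n\to U$ in $(L^p(\omega'))^N$ and $\nabla u_n=\sqrt A^{-1}(\sqrt A\,\nabla u_n)$ we get $\nabla u_n\to\sqrt A^{-1}U$ in $(L^p(\omega'))^N$; combined with $u_n\to u$ in $L^p(\omega')$, passing to the limit in $\int_{\omega'}u_n\,\partial_i\varphi=-\int_{\omega'}(\partial_iu_n)\varphi$ for $\varphi\in C_c^\infty(\omega')$ shows $\sqrt A^{-1}U$ is the weak gradient of $u$ on $\omega'$, hence on $\omega$. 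This is (ii).

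\emph{Globalizing under the integrability hypothesis, and the main difficulty.} Assume now $|\sqrt A^{-1}|\in L^{p'}(\Omega)$. For $\varphi\in C_c^\infty(\Omega)$ integrate by parts in $C^1(\Omega)$: $\int_\Omega u_n\,\partial_i\varphi=-\int_\Omega(\partial_iu_n)\varphi=-\int_\Omega\bigl(\sqrt A^{-1}(\sqrt A\,\nabla u_n)\bigr)_i\varphi$. The left side tends to $\int_\Omega u\,\partial_i\varphi$; for the right side, $\bigl\|\sqrt A^{-1}(\sqrt A\,\nabla u_n-U)\bigr\|_{L^1(\Omega)}\le\||\sqrt A^{-1}|\|_{L^{p'}(\Omega)}\|\sqrt A\,\nabla u_n-U\|_{(L^p(\Omega))^N}\to0$ by Hölder, so it tends to $-\int_\Omega(\sqrt A^{-1}U)_i\varphi$; the same Hölder bound gives $\sqrt A^{-1}U\in L^1(\Omega)$. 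Hence $\sqrt A^{-1}U$ is the weak gradient of $u$ on all of $\Omega$, which is (iii). The only delicate point is precisely this step: away from $\Sigma=\{\det A=0\}$ everything is classical elliptic regularity, but across $\Sigma$ the gradients $\nabla u_n=\sqrt A^{-1}(\sqrt A\,\nabla u_n)$ may blow up and the weak gradients on the different components of $\Omega\setminus\Sigma$ cannot in general be patched together; the hypothesis $|\sqrt A^{-1}|\in L^{p'}(\Omega)$ is exactly what controls $\nabla u_n$ in $L^1$ near $\Sigma$ and lets the integration-by-parts identity pass to the limit globally.
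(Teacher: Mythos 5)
Your proposal is correct and follows essentially the same route as the paper's proof: extract $u$ and $U$ as $L^p$-limits of the Cauchy sequences $\{u_n\}$ and $\{\sqrt A\,\nabla u_n\}$ (giving (i) and (iv) by norm continuity), then pass to the limit in the $C^1$ integration-by-parts identity, using boundedness of $\sqrt A^{-1}$ on compact subsets of a component of $\Omega\setminus\Sigma$ for (ii) and the hypothesis $|\sqrt A^{-1}|\in L^{p'}(\Omega)$ with H\"older's inequality for (iii). The only cosmetic difference is that you localize (ii) via $\omega'\Subset\omega$ while the paper argues directly with test functions supported in a component, which amounts to the same estimate.
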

  
  \begin{proof}
   Let $[\{u_n\}]\in W^{1,\lambda,p}$. So $[\{u_n\}]$ is a Cauchy sequence in $(V_A,\|.\|)$. Clearly $\{u_n\}$ and $\{\sqrt{A}\nabla u_n\}$ are Cauchy in $L^p(\Omega)$ and $L^p(\Omega)^N$. Hence there exists $u\in L^p(\Omega)$ and $U\in L^p(\Omega)^N$ such that $u_n\to u$ in $L^p(\Omega)$ and $\{\sqrt{A}\nabla u_n\}\to U$ in $L^p(\Omega)^N$ as $n\to\infty$. If $[\{u_n\}]=[\{v_n\}]$ and $\{\sqrt{A}\nabla u_n\}\to U$, $\{\sqrt{A}\nabla v_n\}\to V$ in $L^p(\Omega)^N$ as $n\to\infty$. Then 
   \begin{align*}
       \|U-V\|_{L^p(\Omega)^N}&\leq  \|\sqrt{A}\nabla u_n-U\|_{L^p(\Omega)^N}+ \|\sqrt{A}\nabla u_n-\sqrt{A}\nabla v_n\|_{L^p(\Omega)^N}+  \|\sqrt{A}\nabla v_n-V\|_{L^p(\Omega)^N}\\
       &\to 0 \text{ as $n\to\infty$}
   \end{align*}
  which implies $U=V$ a.e in $\Omega$. So $U$ does not depend on the representative of the class $[\{u_n\}]$.
  Let $\phi\in C^\infty_0(\Omega)$. Since $u_n\to u$ in $L^p(\Omega)$ so
  $u_n$ converges to $u$ in the distributional sense as well. As $u_n\in C^1(\Omega)$ so $$\int_\Omega u_n \nabla \phi dx=-\int_\Omega \phi\nabla u_n dx$$
   Taking limit $n\to\infty$ we have $$ \int_\Omega u \nabla \phi dx=-\lim_{n\to \infty}\int_\Omega \phi\nabla u_n dx= -\lim_{n\to \infty}\int_\Omega \phi\sqrt{A}^{-1}\sqrt{A}\nabla u_n dx$$
   Hence if $|\phi\sqrt{A}^{-1}|\in L^{p'}(\Omega)$ then 
   \begin{align}\label{San1}
       \int_\Omega u \nabla \phi dx=-\int_\Omega \phi\sqrt{A}^{-1} U dx
   \end{align}
   If support of $\phi$ is contained in a component of $\Omega\setminus\Sigma$ then $|\phi\sqrt{A}^{-1}|\in L^{p'}(\Omega)$. By using (\ref{San1}) we can conclude  that $\sqrt{A}^{-1}U$ is the weak gradient of $u$ in that component of $\Omega\setminus\Sigma$. Hence (ii) is proved. Also, if $|\sqrt{A}^{-1}|\in L^{p'}(\Omega)$ then (\ref{San1}) is true for every $\phi\in C^\infty_0(\Omega)$. So $\sqrt{A}^{-1}U$ is the weak gradient of $u$ in $\Omega$. Which proves (iii).\\
   For $[\{u_n\}]\in W^{1,\lambda,p}(\Omega)$,
   \begin{align*}
       \|[\{u_n\}]\|^p=\lim_{n\to\infty} (\|u_n\|_{L^p(\Omega)}^p+\|\sqrt{A}\nabla u_n\|_{L^p(\Omega)^N}^p)=(\|u\|_{L^p(\Omega)}^p+\|U\|_{L^p(\Omega)^N}^p)
   \end{align*}
   Hence (iv) is proved.
  \end{proof}
  Using the above theorem, we have the following embedding theorem.
   \begin{cor}
  The space $W^{1,\lambda,p}(\Omega)$ is continuously embedded into $L^p(\Omega)$.
  \end{cor}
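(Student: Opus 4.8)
The plan is to identify the canonical map $\iota\colon W^{1,\lambda,p}(\Omega)\to L^p(\Omega)$ furnished by Theorem \ref{REP} and to check it is a bounded injective linear operator. Concretely, given a class $[\{u_n\}]\in W^{1,\lambda,p}(\Omega)$, part (i) of Theorem \ref{REP} produces a unique $u\in L^p(\Omega)$ with $u_n\to u$ in $L^p(\Omega)$; we set $\iota([\{u_n\}])=u$. The corollary then amounts to showing that $\iota$ is well defined, linear, injective, and continuous.

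First I would verify that $\iota$ is well defined and linear. If $[\{u_n\}]=[\{v_n\}]$ then $\|u_n-v_n\|\to0$, and since $\|u_n-v_n\|_{L^p(\Omega)}\le\|u_n-v_n\|$ by the definition of $\|\cdot\|$, the two $L^p$-limits agree a.e.\ in $\Omega$; linearity of $\iota$ is inherited from linearity of limits in $L^p(\Omega)$. Continuity is then immediate from Theorem \ref{REP}(iv): writing $U$ for the associated element of $(L^p(\Omega))^N$, we have $\|u\|_{L^p(\Omega)}^p\le\|u\|_{L^p(\Omega)}^p+\|U\|_{(L^p(\Omega))^N}^p=\|[\{u_n\}]\|^p$, so $\iota$ is bounded with operator norm at most $1$.

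The only point requiring a little more care — and the place I expect the argument to be least automatic — is injectivity, i.e.\ that $\iota$ is a genuine embedding rather than merely a bounded map. Suppose $\iota([\{u_n\}])=u=0$ a.e.\ in $\Omega$. By Theorem \ref{REP}(ii), $\sqrt{A}^{-1}U$ is the weak gradient of $u$ on each connected component of $\Omega\setminus\Sigma$, hence $\sqrt{A}^{-1}U=0$ a.e.\ on $\Omega\setminus\Sigma$; since $\Sigma=\{X\in\Omega:\det[A(X)]=0\}$ is the null set $\{x=0\}$ and $\sqrt{A}$ is invertible off $\Sigma$, this forces $U=0$ a.e.\ on $\Omega$. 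Then Theorem \ref{REP}(iv) yields $\|[\{u_n\}]\|^p=\|u\|_{L^p(\Omega)}^p+\|U\|_{(L^p(\Omega))^N}^p=0$, so $[\{u_n\}]$ is the zero element of $W^{1,\lambda,p}(\Omega)$. Therefore $\iota$ is an injective bounded linear map, which is precisely the claimed continuous embedding of $W^{1,\lambda,p}(\Omega)$ into $L^p(\Omega)$.
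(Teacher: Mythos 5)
Your proposal is correct and follows essentially the same route as the paper: both define the canonical map $[\{u_n\}]\mapsto u$, get boundedness from Theorem \ref{REP}(iv), and prove injectivity by showing $U=0$ a.e.\ off the null set $\Sigma$ via the weak-gradient representation (the paper just re-derives part (ii) with test functions on a component of $\Omega\setminus\Sigma$, which you instead cite directly).
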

  \begin{proof}
   Define the map $T:W^{1,\lambda,p}(\Omega)\to L^p(\Omega)$ by $T([\{u_n\}])=u$. $T$ is a bounded linear map. \\Claim: $T$is injective. Let $u=0$. If we can prove $U=0$, then we are done. Since $\Sigma$ has measure zero, we can prove that $U=0$ a.e in each component of $\Omega\setminus\Sigma$. Let $\Omega'$ be a component of $\Omega\setminus\Sigma$. By the above theorem for every $\phi\in C^\infty_0(\Omega')$
    \begin{align*}\label{San1}
       \int_{\Omega'} \phi\sqrt{A}^{-1} U dx=-\int_{\Omega'} u \nabla \phi dx=0
   \end{align*} 
   which ensures us $\sqrt{A}^{-1}U=0$ a.e in $\Omega'$. So $U=0$ a.e in $\Omega'$.
  \end{proof}
  Henceforth we use the notation $u$ for the element $[\{u_n\}]\in W^{1,\lambda,p}(\Omega)$ or$[\{u_n\}]\in W^{1,\lambda,p}_0(\Omega)$ which is determined in Theorem (\ref{REP}). Using the properties of $U\in(L^p(\Omega))^N$ in the theorem we introduce the following definition:
 \begin{defn}
   For $u\in W^{1,\lambda,p}(\Omega)$ we denote the weak quasi gradient of $u$ by $\nabla^* u$ and defined by $$\nabla^* u:=(\sqrt A)^{-1} U$$ which is a vector-valued function defined almost everywhere in $\Omega$.
 \end{defn}
 Also for $u\in W^{1,\lambda,p}(\Omega)$,
 \begin{align*}
     \|u\|^p&=\|u\|_{L^p(\Omega)}^p+\|\sqrt{A}\nabla^*u\|_{L^p(\Omega)}^p\\
     &=\int_\Omega|u|^p dx + \int_\Omega \langle A\nabla^*u,\nabla^*u\rangle^\frac{p}{2}.
 \end{align*}
 We define $H^{1,\lambda}(\Omega):=W^{1,\lambda,2}(\Omega)$ and $H^{1,\lambda}_0(\Omega):=W^{1,\lambda,2}_0(\Omega)$. $(H^{1,\lambda}(\Omega),\|.\|)$ and $(H^{1,\lambda}_0(\Omega),\|.\|_0)$ are Hilbert spaces.
\begin{theorem}\label{emb1}(Embedding Theorem)(\cite[Theorem 2.6]{FL3} and \cite[Proposition 3.2]{KAL}) Let $\Omega\subset \R^{k+m}$ be an open set. The embedding 
$$H^{1,\lambda}_0(\Omega)\hookrightarrow L^q(\Omega)$$ is continuous for every $q\in[1,2^*_\lambda]$ and compact for $q\in[1,2^*_\lambda)$, where $2^*_\lambda=\frac{2Q}{Q-2},\; Q=k+(\lambda+1)m$. 
\end{theorem}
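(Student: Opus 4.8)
The plan is to establish the continuous embedding into the critical space $L^{2^*_\lambda}(\Omega)$ first on the dense subspace $C^1_0(\Omega)$, then transfer it to all of $H^{1,\lambda}_0(\Omega)$ through the representation of Theorem~\ref{REP}, and finally deduce the subcritical continuous and compact embeddings by Hölder interpolation and a Rellich--Kondrachov-type argument. The core estimate I would prove is the Sobolev-type inequality
$$\|u\|_{L^{2^*_\lambda}(\Omega)}\le C\Big(\int_\Omega\langle A\nabla u,\nabla u\rangle\,dX\Big)^{1/2}\qquad\text{for all }u\in C^1_0(\Omega).$$
Extending $u$ by zero one may regard it as compactly supported in a large $d$-ball. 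The operator $\Delta_\lambda$ is the sum of squares of the fields $\partial_x,|x|^\lambda\partial_{y_1},\dots,|x|^\lambda\partial_{y_m}$, whose control distance is precisely the metric $d$ of Section~\ref{sec2}, and by Franchi--Lanconelli \cite{FL1,FL2} the Lebesgue measure is doubling with respect to $d$ and locally $Q$-homogeneous, $|S_d(X,r)|\approx r^{Q}$, with $Q=k+(\lambda+1)m$ the homogeneous dimension. Feeding the local Poincaré inequality on $d$-balls together with this doubling property into the standard machinery (Saloff-Coste-type iteration, or a representation formula combined with a fractional integration estimate governed by $|S_d(X,r)|\approx r^{Q}$) yields the global Sobolev inequality with the sharp exponent $2^*_\lambda=2Q/(Q-2)$; equivalently one may invoke \cite[Theorem 2.6]{FL3} and \cite[Proposition 3.2]{KAL} directly.

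To pass to $H^{1,\lambda}_0(\Omega)$, let $u=[\{u_n\}]$ with $u_n\in C^1_0(\Omega)$, $u_n\to u$ in $L^2(\Omega)$, and $\sqrt A\nabla u_n\to U=\sqrt A\,\nabla^*u$ in $(L^2(\Omega))^N$, as guaranteed by Theorem~\ref{REP}. Applying the inequality above to each $u_n$, passing to a subsequence converging a.e., and using Fatou's lemma on the left-hand side while the right-hand side converges, one obtains $\|u\|_{L^{2^*_\lambda}(\Omega)}\le C\|u\|_0$. Since $\Omega$ is bounded, Hölder's inequality gives $\|u\|_{L^q(\Omega)}\le|\Omega|^{\frac{1}{q}-\frac{1}{2^*_\lambda}}\|u\|_{L^{2^*_\lambda}(\Omega)}$ for every $q\in[1,2^*_\lambda]$, which proves the continuous embedding in the whole range.

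For compactness with $q<2^*_\lambda$, take $\{u_j\}$ bounded in $H^{1,\lambda}_0(\Omega)$ and set $\Sigma=\{X\in\Omega:\det A(X)=0\}$, i.e. the portion of the hyperplane $\{x=0\}$ lying in $\Omega$. On any $\Omega'\Subset\Omega\setminus\Sigma$ the matrix $A$ is uniformly elliptic, so by Theorem~\ref{REP}(ii) the sequence is bounded in the ordinary $H^1(\Omega')$ and the classical Rellich--Kondrachov theorem produces an $L^q(\Omega')$-convergent subsequence; exhausting $\Omega\setminus\Sigma$ by such $\Omega'$ and diagonalizing gives a subsequence converging in $L^q_{loc}(\Omega\setminus\Sigma)$. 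To handle a neighbourhood $V$ of $\Sigma$ of small Lebesgue measure (possible since $\Sigma$ lies in a hyperplane), the critical embedding and Hölder give
$$\|u_j\|_{L^q(V)}\le|V|^{\frac{1}{q}-\frac{1}{2^*_\lambda}}\|u_j\|_{L^{2^*_\lambda}(\Omega)}\le C|V|^{\frac{1}{q}-\frac{1}{2^*_\lambda}},$$
uniformly small in $j$ because $q<2^*_\lambda$. Combining the interior convergence with this uniform tail estimate shows the subsequence is Cauchy in $L^q(\Omega)$, which yields compactness.

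I expect the main obstacle to be the first step: the critical Sobolev inequality genuinely relies on the sub-Riemannian geometry of the Grushin fields — the doubling property and the Poincaré inequality on $d$-balls — rather than just the elementary Poincaré inequality proved earlier, and obtaining the exponent $2^*_\lambda$ with the correct homogeneous dimension $Q$ is the delicate point. A secondary technical issue is carrying out the limit passage cleanly through the quasi-gradient representation of Theorem~\ref{REP}, and verifying that the degeneracy set $\Sigma$ can indeed be enclosed in open sets of arbitrarily small Lebesgue measure so that the tail estimate in the compactness argument is legitimate.
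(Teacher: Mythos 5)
The paper does not actually prove Theorem \ref{emb1}: it is imported verbatim from \cite[Theorem 2.6]{FL3} and \cite[Proposition 3.2]{KAL}, so there is no internal argument to compare against. Your sketch is a legitimate reconstruction, but note that its core step --- the critical inequality $\|u\|_{L^{2^*_\lambda}(\Omega)}\le C\big(\int_\Omega\langle A\nabla u,\nabla u\rangle\,dX\big)^{1/2}$ on $C^1_0(\Omega)$ --- is in the end also deferred to the very same references, so the heart of the theorem remains outsourced exactly as in the paper; what you add on top of the citation is sound and worthwhile: the transfer of the inequality from $C^1_0(\Omega)$ to the abstract completion $H^{1,\lambda}_0(\Omega)$ via Theorem \ref{REP} (Fatou on an a.e.\ convergent subsequence, convergence of $\sqrt A\nabla u_n$ on the right), the subcritical continuity by H\"older on the bounded domain, and the compactness by combining uniform ellipticity of $A$ away from $\Sigma$ (so classical Rellich--Kondrachov applies there, using Theorem \ref{REP}(ii) to identify $\nabla^*u$ with the true weak gradient) with a uniformly small $L^q$ tail coming from the critical bound and $q<2^*_\lambda$.

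Two small points deserve tightening. First, the heuristic $|S_d(X,r)|\approx r^{Q}$ is not uniform: for Grushin balls one has $|S_d(X,r)|\approx r^{k+m}(|x|+r)^{\lambda m}$, so the exponent $Q$ only governs the worst case near $\{x=0\}$; the correct statement is a doubling/local $Q$-homogeneity property, which is indeed what the cited machinery uses, but as written your volume estimate is false at points far from the degeneracy set. Second, in the compactness step the exceptional set on which you invoke the tail bound must be all of $\Omega\setminus\Omega'$, i.e.\ it contains a strip near $\partial\Omega$ as well as a neighbourhood of $\Sigma$; since $|\Sigma|=0$ and $\Omega$ is bounded, this complement still has arbitrarily small measure, so the argument survives, but the phrasing ``neighbourhood of $\Sigma$'' alone does not cover it. Also recall that the classical Rellich--Kondrachov theorem needs the exhausting sets $\Omega'$ to be extension domains (e.g.\ finite unions of balls), which is easy to arrange but should be said.
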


\begin{theorem}(Stampacchia-Kinderlehrer \cite[\text{ lemma B.1}] {KDS})\label{stam}
 Let $\phi:[k_0,\infty)\to \R$ be a nonnegative and nonincreasing such that for $ k_0\leq k\leq h$,$$\phi(h)\leq [C/(h-k)^\alpha]|\phi(k)|^\beta$$ where $C,\alpha,\beta$ are positive constant with $\beta>1$. Then $$\phi(k_0+d)=0$$ where $d^\alpha=C2^{\frac{\alpha\beta}{\beta-1}}|\phi(k_0)|^{(\beta-1)}$ 
\end{theorem}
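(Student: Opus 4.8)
The plan is to run the classical fast-geometric-decay iteration (the ``de Giorgi--Stampacchia'' iteration lemma). If $\phi(k_0)=0$ then $d=0$ and there is nothing to prove, so I may assume $\phi(k_0)>0$, hence $d>0$. I would introduce the increasing sequence of levels
\[
k_n = k_0 + d - \frac{d}{2^n}, \qquad n=0,1,2,\dots,
\]
so that $k_0$ is the initial level, $k_n \uparrow k_0+d$, each $k_n$ lies in $[k_0,\infty)$, and $k_{n+1}-k_n = d\,2^{-(n+1)}>0$. The target is the decay estimate
\[
\phi(k_n) \le \frac{\phi(k_0)}{2^{\mu n}}, \qquad \mu := \frac{\alpha}{\beta-1},
\]
which I would prove by induction on $n$.

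The base case $n=0$ is an equality. For the inductive step I would apply the hypothesis with $h=k_{n+1}$ and $k=k_n$ (legitimate since $k_0\le k_n<k_{n+1}$), using nonnegativity of $\phi$ to replace $|\phi(k_n)|$ by $\phi(k_n)$, obtaining
\[
\phi(k_{n+1}) \le \frac{C}{(k_{n+1}-k_n)^\alpha}\,\phi(k_n)^\beta
= \frac{C\,2^{\alpha(n+1)}}{d^\alpha}\,\phi(k_n)^\beta
\le \frac{C\,2^{\alpha(n+1)}}{d^\alpha}\cdot\frac{\phi(k_0)^\beta}{2^{\mu\beta n}},
\]
the last step being the induction hypothesis. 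It then remains to check that this right-hand side is $\le \phi(k_0)\,2^{-\mu(n+1)}$. Collecting powers of $2$, this reduces to the single inequality
\[
\frac{C\,\phi(k_0)^{\beta-1}}{d^\alpha}\;2^{\,(\alpha-\mu(\beta-1))n + (\alpha+\mu)} \le 1 .
\]
Here the calibration $\mu=\alpha/(\beta-1)$ is exactly what makes the coefficient of $n$ in the exponent vanish, and then $\alpha+\mu = \alpha\beta/(\beta-1)$, so the inequality becomes $C\,\phi(k_0)^{\beta-1}\,2^{\alpha\beta/(\beta-1)}\le d^\alpha$, which holds by the very definition of $d$. This closes the induction.

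Finally, since $\mu>0$, letting $n\to\infty$ yields $\phi(k_n)\to 0$; and because $\phi$ is nonincreasing with $k_n\le k_0+d$ for all $n$, we get $0\le \phi(k_0+d)\le \phi(k_n)\to 0$, hence $\phi(k_0+d)=0$. I do not anticipate any genuine obstacle: the only delicate point is the choice of the decay exponent $\mu$ so that the iteration is self-improving, and checking that the accumulated constant matches the prescribed $d^\alpha = C\,2^{\alpha\beta/(\beta-1)}\,\phi(k_0)^{\beta-1}$ precisely (which is why the statement pins $d$ to that exact value rather than any larger one).
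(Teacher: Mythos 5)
Your proof is correct: the levels $k_n=k_0+d-d/2^n$, the induction $\phi(k_n)\le \phi(k_0)2^{-\mu n}$ with $\mu=\alpha/(\beta-1)$, and the final monotonicity step all check out, and the constant matches the prescribed $d^\alpha=C2^{\alpha\beta/(\beta-1)}\phi(k_0)^{\beta-1}$ exactly. The paper itself gives no proof of this lemma (it is quoted from Kinderlehrer--Stampacchia), and your argument is precisely the classical iteration from that reference, so there is nothing further to compare.
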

Now we will prove the Strong Maximum Principle for super-solutions of $-\Delta_\lambda u=0$. In this proof, we denote $\rho$ and $S_\rho$, which are defined in \cite[Definition 2.6]{FL1}. The constants $a, c_1$ are introduced in \cite[Theorem 4.3]{FL1}. Also, $c$ and $\epsilon_0$ are defined in \cite[Proposition 2.9]{FL1}.
 \begin{theorem}\label{smp}(Strong Maximum Principle) Let $\Omega\subset \R^{1+m}$ be a $\Lambda-$connected, bounded open set and $\lambda\geq 1$. Let u be a nonnegative 
(not identically zero) function in $H^{1,\lambda}_0(\Omega)$ such that $u$ is a super solution of $-\Delta_\lambda u=0$, i.e., for every nonnegative $v\in H^{1,\lambda}_0(\Omega)$,    $$\int_\Omega \langle A\nabla^*u,\nabla^*v\rangle dX\geq 0.$$ If there exist a ball $B_r(x_0)\Subset\Omega$ with $\inf_{B_r(x_0)}u=0$ then $u$ is identically zero in $\Omega$.
 \end{theorem}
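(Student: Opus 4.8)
The strategy is a connectedness argument driven by the weak Harnack inequality for nonnegative supersolutions, \cite[Theorem 4.3]{FL1}, which is available on the $d$-balls precisely because $\lambda\ge 1$ and $\Omega$ is $\Lambda$-connected. Put
\[
\Omega_0:=\{X\in\Omega:\ u=0\ \text{a.e. on some Euclidean ball }B_\delta(X)\subset\Omega\}.
\]
I will show $\Omega_0$ is open, nonempty, and relatively closed in $\Omega$; since a $\Lambda$-connected set is connected, this forces $\Omega_0=\Omega$, i.e. $u=0$ a.e. in $\Omega$, which is the assertion (and, read against the standing assumption $u\not\equiv0$, shows the hypothesised configuration is impossible). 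Openness is immediate from the definition: if $u=0$ a.e. on $B_\delta(X)$ and $|X'-X|$ is small, then $u=0$ a.e. on $B_{\delta-|X'-X|}(X')\subset B_\delta(X)$.

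The analytic input is used once, as the implication: \emph{if $S_d(X,R)$ is a $d$-ball with $\overline{S_d(X,aR)}\subset\Omega$ and $R$ is below the smallness thresholds of \cite[Proposition 2.9]{FL1} and \cite[Theorem 4.3]{FL1}, then $\operatorname*{ess\,inf}_{S_d(X,R)}u=0$ implies $u=0$ a.e. on $S_d(X,2R)$.} Indeed the weak Harnack estimate reads $\big(\frac{1}{|S_d(X,2R)|}\int_{S_d(X,2R)}u^{p_0}\big)^{1/p_0}\le c_1\operatorname*{ess\,inf}_{S_d(X,R)}u$ for some exponent $p_0>0$, and the restriction of $u$ to $S_d(X,aR)$ is an admissible nonnegative supersolution there since $H^{1,\lambda}_0(S_d(X,aR))\hookrightarrow H^{1,\lambda}_0(\Omega)$ by extension by zero; a vanishing right-hand side forces $u\equiv0$ on $S_d(X,2R)$. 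For nonemptiness of $\Omega_0$ I read $\inf_{B_r(x_0)}u=0$ as $\operatorname*{ess\,inf}_{B_r(x_0)}u=0$ (equivalently, the infimum of the lower semicontinuous representative of the supersolution), cover the compact set $\overline{B_r(x_0)}\subset\Omega$ by finitely many $d$-balls $S_d(X_i,R_i)$ each satisfying $\overline{S_d(X_i,aR_i)}\subset\Omega$ and the smallness constraint above (possible because $d$-balls form a neighbourhood basis by \cite[Proposition 2.9]{FL1} and $\overline{B_r(x_0)}$ lies at positive distance from $\partial\Omega$), and observe that $\operatorname*{ess\,inf}_{S_d(X_i,R_i)}u>0$ for every $i$ would make $u$ bounded below by a positive constant a.e. on $B_r(x_0)$, a contradiction. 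Hence $\operatorname*{ess\,inf}_{S_d(X_i,R_i)}u=0$ for some $i$, so by the displayed implication $u=0$ a.e. on $S_d(X_i,2R_i)$, which contains a Euclidean ball about $X_i$; thus $X_i\in\Omega_0$.

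Finally, $\Omega_0$ is relatively closed in $\Omega$: if $X_k\in\Omega_0$ and $X_k\to X_*\in\Omega$, choose a $d$-ball $S_d(X_*,R)$ with $\overline{S_d(X_*,aR)}\subset\Omega$ and $R$ small; since the $d$-topology coincides with the Euclidean one (\cite[Proposition 2.9]{FL1}), $S_d(X_*,R)$ is a Euclidean neighbourhood of $X_*$, so for large $k$ it contains $X_k$ together with a Euclidean ball about $X_k$ on which $u=0$ a.e. This positive-measure subset of $S_d(X_*,R)$ gives $\operatorname*{ess\,inf}_{S_d(X_*,R)}u=0$, whence $u=0$ a.e. on $S_d(X_*,2R)$, a neighbourhood of $X_*$, so $X_*\in\Omega_0$. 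Connectedness then yields $\Omega_0=\Omega$ and $u\equiv0$. I expect the only real difficulty to be bookkeeping: aligning the dilation constant $a$ and the smallness thresholds from \cite{FL1} so that each $d$-ball on which the weak Harnack is invoked has its required enlargement inside $\Omega$, and making sure the comparison of $d$-balls with Euclidean balls near the degeneracy set $\{x=0\}$ is used only in the topological form supplied by \cite[Proposition 2.9]{FL1}; all the analytic weight is carried by the weak Harnack inequality.
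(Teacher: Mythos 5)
Your proof is correct and follows essentially the same route as the paper's: in both, the sole analytic input is the Franchi--Lanconelli weak Harnack inequality, used to show that vanishing of the essential infimum of $u$ on a small ball forces $u=0$ a.e.\ on an enlarged ball, and this local vanishing is then spread through all of $\Omega$ by connectedness. The differences are only presentational: the paper implements the connectedness step by a finite chain of overlapping Euclidean balls and states the Harnack inequality on the $\rho$-balls of \cite{FL1} (passing between Euclidean, $d$- and $\rho$-balls via Proposition 2.9 and Theorem 2.7 there), whereas you use the open/relatively-closed dichotomy with $d$-balls and defer exactly that ball-comparison bookkeeping, which is the same alignment of constants the paper carries out explicitly.
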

\begin{proof}
Let $n_0$ be a natural number such that $n_0^{\epsilon_0}>2c_1$. We can choose $r>0$ such that $B(X_0,n_0r)\Subset \Omega$, $\inf_{B_r(X_0)}u=0$ and $S_\rho(X,ac(n_0r)^{\epsilon_0})\subset \Omega$. By using (\cite[\text{Proposition 2.9}]{FL1}) and (\cite[\text{Theorem 2.7}]{FL1}) we have $$B(X_0,r)\subset B(X_0,n_0r)\subset S_d(X_0,c(n_0r)^{\epsilon_0})\subset S_\rho(X_0,ac(n_0r)^{\epsilon_0})\subset\Omega$$       
Put $R=\frac{ac(n_0r)^{\epsilon_0}}{c_1}$ and by \cite[\text{Theorem 4.3}]{FL1} with $p=1$, we have 
\begin{equation}\label{HR1}
    \inf_{S_\rho(X_0,\frac{R}{2})}u\geq M |S_\rho(X_0,R)|^{-1}\int_{S_\rho(X_0,R)}|u|\ dX.
\end{equation}
By using (\cite[\text{Proposition 2.9}]{FL1}) and (\cite[\text{Theorem 2.7}]{FL1}) we easily can show that $B(X_0,r)\subset S_\rho(X_0,\frac{R}{2})$. Hence, $\inf_{S_\rho(X_0,\frac{R}{2})}u=0$. By (\ref{HR1}) we have $u=0$ a.e. in $S_\rho(X_0,R)$ and hence, in $B(X_0,r)$. Let $Y\in\Omega$ and $r_0=r$. Since $\Omega$ is a bounded domain, we can find a finite collection of balls $\{B(X_i,r_i)\}_{i=0}^{i=k}$ such that $B(X_i,n_0r_i)\Subset \Omega$, $S_\rho(X_i,ac(n_0r_i)^{\epsilon_0})\subset \Omega$, $B(X_{i-1},r_{i-1})\cap B(X_{i},r_{i})\neq\emptyset$ for $i=1,2...k$ and $Y\in B(X_k,r_k)$. We can use the previous process to show that $u=0$ a.e. in $B(X_1,r_1)$. Iterating we have $u=0$ a.e. in $B(X_k,r_k)$. Hence, $u=0$ a.e.in $\Omega$.                                                             
\end{proof}
Now we are ready to define the notion of solution of (\ref{maineq}).
\begin{defn}\label{def2}
 A function $u\in H^{1,\lambda}_{loc}(\Omega)$ is said to be a weak solution of (\ref{maineq}) if  for every $\Omega'\Subset\Omega$, there exists a positive constant $C(\Omega')$ such that 
 \begin{align*}
     u\geq C(\Omega')>0 \text{ a.e in $\Omega'$},
 \end{align*}
 \begin{align*}
 \int_\Omega \langle A\nabla^*u,\nabla v\rangle\,dX=\int_\Omega \frac{fv}{u^\nu}\,dX\;\mbox{ for all  $v\in C^1_0(\Omega)$}
 \end{align*}
and 
 \begin{itemize}
     \item if $\nu\leq 1$ then $u\in  H^{1,\lambda}_0(\Omega)$.
     \item if $\nu>1$ then $u^{\frac{\nu+1}{2}}\in  H^{1,\lambda}_0(\Omega)$.
 \end{itemize} 
 \end{defn}

\begin{center}
    \section{ EXISTENCE AND REGULARITY RESULTS}\label{sec3}
\end{center}
Henceforth, we will assume $N=1+m$, and  $\Omega\subset\R^N$ is a $\Lambda-$connected, bounded open set. We will also assume $f$ is a nonnegative (not identically zero) function and $\lambda\geq 1$. Our main results are the following:

\subsection{The case $\nu=1$}
\begin{theorem}\label{Th1}
Let $\nu=1$ and $f\in L^1(\Omega)$. Then the Dirichlet boundary value problem (\ref{maineq}) has a unique solution in the sense of definition (\ref{def2}).   
\end{theorem}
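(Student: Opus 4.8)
The plan is to adapt the approximation scheme of Boccardo--Orsina \cite{BLO} to the Grushin operator, relying on the functional setting, the embedding Theorem~\ref{emb1}, the Poincar\'e inequality and the Strong Maximum Principle (Theorem~\ref{smp}) from Section~\ref{sec2}. For $n\in\N$ set $f_n=\min\{f,n\}$ and consider the non-singular problems
\[
-\Delta_\lambda u_n=\frac{f_n}{u_n+\tfrac1n}\ \text{ in }\Omega,\qquad u_n\in H^{1,\lambda}_0(\Omega).
\]
I would first solve these by Schauder's fixed point theorem: the map sending $w\in L^2(\Omega)$ to the Lax--Milgram solution $u\in H^{1,\lambda}_0(\Omega)$ of $-\Delta_\lambda u=f_n/(|w|+\tfrac1n)$ is well defined (the datum is bounded by $n^2$), continuous on $L^2(\Omega)$, and compact by Theorem~\ref{emb1}; a fixed point $u_n$ exists, and since the datum is nonnegative, testing with $u_n^-$ gives $u_n\ge 0$, so $u_n$ solves the displayed problem and is a nontrivial nonnegative supersolution of $-\Delta_\lambda u=0$ (nontrivial because $f_n\not\equiv0$).

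Next I would record the a priori information. Subtracting the equations for $u_n$ and $u_{n+1}$ and testing with $(u_n-u_{n+1})^+$ gives $u_n\le u_{n+1}$, as $t\mapsto f_n/(t+\tfrac1n)$ is nonincreasing while $f_n\le f_{n+1}$ and $\tfrac1n\ge\tfrac1{n+1}$; thus $u_n\ge u_1$ for all $n$. Since $u_1$ is a nontrivial nonnegative supersolution, Theorem~\ref{smp} yields $u_1>0$ in $\Omega$, and covering any $\Omega'\Subset\Omega$ by finitely many small balls produces $C(\Omega')>0$ with
\[
u_n\ge u_1\ge C(\Omega')\quad\text{a.e. in }\Omega', \text{ uniformly in }n .
\]
Testing the equation for $u_n$ with $u_n$ and using $u_n/(u_n+\tfrac1n)\le 1$ gives the uniform bound $\|u_n\|_0^2\le\|f\|_{L^1(\Omega)}$.

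Now I would pass to the limit. By the uniform bound and monotonicity, $u_n$ converges to some $u\in H^{1,\lambda}_0(\Omega)$ weakly in $H^{1,\lambda}_0(\Omega)$, strongly in $L^2(\Omega)$ and a.e., with $u_n\uparrow u$ and $u\ge C(\Omega')$ a.e. in every $\Omega'\Subset\Omega$. For $v\in C^1_0(\Omega)$ with $K:=\operatorname{supp}v\Subset\Omega$, the term $\int_\Omega\langle A\nabla^*u_n,\nabla v\rangle\,dX$ converges by weak convergence in $H^{1,\lambda}_0(\Omega)$, while on $K$ one has $\big|f_nv/(u_n+\tfrac1n)\big|\le\|v\|_\infty f/C(K)\in L^1(\Omega)$ with pointwise a.e. limit $fv/u$, so dominated convergence gives $\int_\Omega\langle A\nabla^*u,\nabla v\rangle\,dX=\int_\Omega\frac{fv}{u}\,dX$. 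Together with $u\in H^{1,\lambda}_0(\Omega)$ and the lower bounds, $u$ is a weak solution in the sense of Definition~\ref{def2}.

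For uniqueness, let $u_1,u_2$ be two weak solutions. The plan is to test the equations for $u_1$ and $u_2$ with $w:=(u_1-u_2)^+\in H^{1,\lambda}_0(\Omega)$ and subtract, obtaining
\[
\int_\Omega\langle A\nabla^*w,\nabla^*w\rangle\,dX=\int_\Omega f\,(u_1-u_2)^+\Big(\tfrac1{u_1}-\tfrac1{u_2}\Big)\,dX\le 0 ,
\]
because the right-hand integrand is $\le 0$ on $\{u_1>u_2\}$ and vanishes elsewhere; as $A$ is positive semidefinite, $\|w\|_0=0$, hence $w=0$ by the Poincar\'e inequality, i.e. $u_1\le u_2$, and symmetrically $u_2\le u_1$. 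I expect this last step to be the main obstacle: Definition~\ref{def2} allows only $C^1_0(\Omega)$ test functions, so $w$ must be admitted by approximation. I would take $\psi_j\in C^1_0(\Omega)$ with $0\le\psi_j\le w$ and $\psi_j\to w$ in $H^{1,\lambda}_0(\Omega)$; since $w\le u_1$ on $\{w>0\}$, the bound $f\psi_j/u_1\le f$ gives an $L^1$ dominant in the equation for $u_1$ (hence equality in the limit), while in the equation for $u_2$ one passes to the limit by Fatou's lemma, which at the same time shows a posteriori that $f(u_1-u_2)^+/u_2\in L^1(\Omega)$ (the companion term $\int_\Omega\langle A\nabla^*u_2,\nabla^*w\rangle\,dX$ being finite). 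Apart from this, the delicate ingredient is the uniform local positivity $u_n\ge u_1\ge C(\Omega')$, which is exactly where the assumptions $\lambda\ge1$ and $\Lambda$-connectedness enter, via Theorem~\ref{smp}.
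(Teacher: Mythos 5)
Your proposal follows essentially the same route as the paper: the approximation $-\Delta_\lambda u_n=f_n/(u_n+\tfrac1n)$ solved by a fixed point argument, monotonicity and uniform local positivity via Theorem \ref{smp} (the paper's Lemma \ref{lem1}), the uniform bound $\|u_n\|_0^2\le\|f\|_{L^1(\Omega)}$ (Lemma \ref{L1}), passage to the limit by weak convergence plus dominated convergence, and uniqueness by testing with $(u_1-u_2)^+$. The only difference is that you justify admitting $w=(u_1-u_2)^+$ as a test function by an approximation/Fatou argument, a point the paper simply asserts, so your treatment is if anything slightly more careful at that step.
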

\begin{theorem}\label{Th2}
  Let $\nu=1$ and $f\in L^r(\Omega),r\geq 1$. Then the solution given by Theorem \ref{Th1} satisfies the following 
  \begin{enumerate}[label=(\roman*)]
      \item If $r>\frac{Q}{2}$ then $u\in L^\infty(\Omega)$.
      \item If $1\leq r<\frac{Q}{2}$ then 
      $u\in L^s(\Omega)$.
  \end{enumerate}
  where $Q=(m+1)+\lambda m$ and $s=\frac{2Qr}{Q-2r}$.
 \end{theorem}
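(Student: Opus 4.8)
The plan is to run the classical regularity machinery adapted to the weighted setting: Stampacchia's truncation lemma (Theorem~\ref{stam}) for the $L^\infty$ bound in (i), and one power-type test function together with a truncation and monotone convergence for the $L^s$ bound in (ii), with the embedding $H^{1,\lambda}_0(\Omega)\hookrightarrow L^{2^*_\lambda}(\Omega)$ from Theorem~\ref{emb1} playing the role of the Euclidean Sobolev inequality. For part (i) I would set $\mu(k):=|\{X\in\Omega:u(X)>k\}|$ for $k>0$ and test the equation with $G_k(u):=(u-k)^+\in H^{1,\lambda}_0(\Omega)$. Since $G_k(u)/u\le 1$ on $\{u>k\}$ and $G_k(u)$ vanishes elsewhere, the singular term is dominated by $f\in L^1(\Omega)$, and one gets
$$\int_\Omega\langle A\nabla^*G_k(u),\nabla^*G_k(u)\rangle\,dX=\int_\Omega\frac{f\,G_k(u)}{u}\,dX\le\int_{\{u>k\}}f\,dX\le\|f\|_{L^r(\Omega)}\,\mu(k)^{1/r'}.$$
Feeding this into the Sobolev inequality and using $(h-k)^{2^*_\lambda}\mu(h)\le\int_\Omega|G_k(u)|^{2^*_\lambda}\,dX$ for $h>k$ yields a recursion $\mu(h)\le C(h-k)^{-2^*_\lambda}\mu(k)^{\beta}$ with $\beta=\tfrac{2^*_\lambda}{2r'}=\tfrac{Q(r-1)}{r(Q-2)}$ and $C=C(\|f\|_{L^r},\Omega,Q)$. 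An elementary computation shows $\beta>1$ exactly when $r>\tfrac Q2$, so Theorem~\ref{stam} (with $\phi=\mu$, $\alpha=2^*_\lambda$, $k_0=0$) forces $\mu(d)=0$ for a finite level $d$, i.e. $u\in L^\infty(\Omega)$.

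For part (ii) we have $\beta\le1$, so Stampacchia's lemma no longer applies and the summability of $u$ must be improved directly. I would choose $\gamma:=\tfrac{2^*_\lambda}{2r'-2^*_\lambda}$, which is positive because $r<\tfrac Q2$ forces $2r'>2^*_\lambda$, and record the arithmetic identities $\tfrac{2\gamma r'}{\gamma+1}=2^*_\lambda$ and $(\gamma+1)2^*_\lambda=2\gamma r'=\tfrac{2Qr}{Q-2r}=s$. For $k>0$ test the equation with $v_k:=u\,(T_ku)^{2\gamma}$, where $T_ku:=\min\{u,k\}$, and set $w_k:=u\,(T_ku)^{\gamma}\in H^{1,\lambda}_0(\Omega)$. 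Using $\tfrac{2\gamma+1}{(\gamma+1)^2}\le 1$ and $(T_ku)^{\gamma+1}\le w_k$ one obtains
$$\frac{2\gamma+1}{(\gamma+1)^2}\int_\Omega\langle A\nabla^*w_k,\nabla^*w_k\rangle\,dX\le\int_\Omega f\,(T_ku)^{2\gamma}\,dX\le\|f\|_{L^r(\Omega)}\Big(\int_\Omega w_k^{2^*_\lambda}\,dX\Big)^{1/r'},$$
the last inequality using the choice of $\gamma$. Applying Theorem~\ref{emb1} to the left-hand side and absorbing the smaller power of $\|w_k\|_{L^{2^*_\lambda}}$ on the right (the exponent $\tfrac{2\gamma}{\gamma+1}<2$), one arrives at a bound for $\|w_k\|_{L^{2^*_\lambda}(\Omega)}$ independent of $k$. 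Since $w_k\uparrow u^{\gamma+1}$ pointwise as $k\to\infty$, monotone convergence gives $u^{\gamma+1}\in L^{2^*_\lambda}(\Omega)$, that is, $u\in L^{(\gamma+1)2^*_\lambda}(\Omega)=L^s(\Omega)$. For $r=1$ one has $s=2^*_\lambda$, which is already contained in $u\in H^{1,\lambda}_0(\Omega)\hookrightarrow L^{2^*_\lambda}(\Omega)$.

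The main obstacle is not the algebra but the admissibility of the non-smooth test functions $G_k(u)$ and $v_k$: Definition~\ref{def2} only provides the weak formulation against $v\in C^1_0(\Omega)$, and the datum $f/u^\nu$ is in general not integrable against an arbitrary element of $H^{1,\lambda}_0(\Omega)$, since $u$ degenerates at $\partial\Omega$. I would get around this by carrying out all of the estimates above on the bounded approximating solutions $u_n$ constructed in the proof of Theorem~\ref{Th1} — for which every truncation-type test function is admissible and the right-hand side is bounded — obtaining the recursion of part (i) and the $k$-uniform bound of part (ii) with constants independent of $n$, and only then passing to the limit through Fatou's lemma and monotone convergence using $u_n\uparrow u$. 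A subsidiary point, needed in order to differentiate $G_k(u)$, $w_k$ and $v_k$, is that composition with a Lipschitz function vanishing at the origin maps $H^{1,\lambda}_0(\Omega)$ into itself and satisfies the chain rule for the quasi-gradient $\nabla^*$; this follows from the representation in Theorem~\ref{REP}.
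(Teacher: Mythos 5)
Your proposal is correct and follows essentially the same route as the paper: all estimates are carried out on the approximating solutions $u_n$ and passed to the limit, part (i) is the same Stampacchia truncation argument (your level-set exponent $\beta=\tfrac{2^*_\lambda}{2r'}$ differs from the paper's $2^*_\lambda\bigl(1-\tfrac{1}{2^*_\lambda}-\tfrac{1}{r}\bigr)$ only because you bound $G_k(u)/u\le 1$ instead of keeping $fG_k$ and using a three-factor H\"older estimate, and both exceed $1$ exactly when $r>\tfrac{Q}{2}$), and part (ii) uses the same power-type test function with your $\gamma+1$ equal to the paper's $\delta=\tfrac{r(Q-2)}{Q-2r}$, the paper simply testing with $u_n^{2\delta-1}$ directly (invoking density and the boundedness of $u_n$) where you introduce the truncation $T_k$ and absorb. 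No gaps; if anything, your handling of test-function admissibility is more careful than the paper's.
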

\subsection{The case $\nu>1$}
\begin{theorem}\label{Th3}
 Let $\nu>1$ and $f\in L^1(\Omega)$. Then there exists $u\in H^{1,\lambda}_\text{loc}(\Omega)$ which satisfies equation (\ref{maineq}) in sense of definition (\ref{def2}).
\end{theorem}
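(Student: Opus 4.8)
The plan is to adapt the Boccardo--Orsina approximation scheme to the degenerate setting. Set $f_n=\min\{f,n\}$ and, for each $n\in\N$, consider the regularized problem $-\Delta_\lambda u_n=f_n\bigl(u_n^{+}+\tfrac1n\bigr)^{-\nu}$ in $\Omega$ with $u_n=0$ on $\partial\Omega$. Since the right-hand side, as a function of the unknown, is bounded (by $n^{\nu+1}$) and Carath\'eodory, a Schauder fixed-point argument in $L^2(\Omega)$ — using that $-\Delta_\lambda$ is an isomorphism of $H^{1,\lambda}_0(\Omega)$ onto its dual (Lax--Milgram via the Poincar\'e inequality), that $L^2(\Omega)\hookrightarrow(H^{1,\lambda}_0(\Omega))^{*}$, and the compact embedding of Theorem~\ref{emb1} — produces a solution $u_n\in H^{1,\lambda}_0(\Omega)$. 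Testing with $u_n^{-}$ gives $u_n\ge0$; then $u_n$ is a nonnegative supersolution of $-\Delta_\lambda u=0$ which is not identically zero (otherwise $f_n\equiv0$, contradicting $f\not\equiv0$), so Theorem~\ref{smp} gives $u_n>0$ with $\inf_{B_r(X_0)}u_n>0$ for every ball $B_r(X_0)\Subset\Omega$; and since the right-hand side lies in $L^\infty(\Omega)$, the De Giorgi--Stampacchia technique built on Theorem~\ref{emb1} and Theorem~\ref{stam} yields $u_n\in L^\infty(\Omega)$. A comparison argument (subtract the equations for $u_n$ and $u_{n+1}$, test with $(u_n-u_{n+1})^{+}$, use $f_n\le f_{n+1}$ and the monotonicity of $s\mapsto(s+\tfrac1n)^{-\nu}$ in $n$) shows $u_n\le u_{n+1}$, hence $u_n\ge u_1$; covering any $\Omega'\Subset\Omega$ by finitely many balls then gives a constant $C(\Omega')>0$, \emph{independent of $n$}, with $u_n\ge C(\Omega')$ a.e.\ in $\Omega'$.

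Next come the a priori estimates, in two flavours. Globally, since $u_n\in H^{1,\lambda}_0(\Omega)\cap L^\infty(\Omega)$ and $t\mapsto t^\nu$ is Lipschitz on bounded intervals with $0^\nu=0$, the function $u_n^\nu$ is an admissible test function, and $\nabla^{*}(u_n^\nu)=\nu u_n^{\nu-1}\nabla^{*}u_n$ a.e.\ (the chain rule holds for $\nabla^{*}$ because it agrees with the weak gradient off the null set $\Sigma=\{x=0\}$, by Theorem~\ref{REP}); this yields
\[
\nu\int_\Omega u_n^{\nu-1}\langle A\nabla^{*}u_n,\nabla^{*}u_n\rangle\,dX=\int_\Omega\frac{f_nu_n^\nu}{(u_n+\tfrac1n)^\nu}\,dX\le\int_\Omega f_n\,dX\le\|f\|_{L^1(\Omega)},
\]
which rewrites as $\int_\Omega\langle A\nabla^{*}(u_n^{(\nu+1)/2}),\nabla^{*}(u_n^{(\nu+1)/2})\rangle\,dX\le\frac{(\nu+1)^2}{4\nu}\|f\|_{L^1(\Omega)}$; by the Poincar\'e inequality $\{u_n^{(\nu+1)/2}\}$ is bounded in $H^{1,\lambda}_0(\Omega)$, hence $\{u_n\}$ is bounded in $L^{\nu+1}(\Omega)\subseteq L^2(\Omega)$. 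Locally, fix $\Omega'\Subset\Omega''\Subset\Omega$ and $\varphi\in C^1_0(\Omega)$ with $\varphi\equiv1$ on $\Omega'$ and $\operatorname{supp}\varphi\subset\Omega''$; testing with $u_n\varphi^2$, using $u_n\ge C(\Omega'')$ on $\operatorname{supp}\varphi$ so that $f_nu_n\varphi^2(u_n+\tfrac1n)^{-\nu}\le C(\Omega'')^{1-\nu}f\varphi^2$, absorbing the cross term by Young's inequality, and invoking the $L^2$ bound, one obtains $\int_{\Omega'}\langle A\nabla^{*}u_n,\nabla^{*}u_n\rangle\,dX\le C(\Omega')$ uniformly in $n$, so $\{u_n\}$ is bounded in $H^{1,\lambda}(\Omega')$ for every $\Omega'\Subset\Omega$.

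Finally, pass to the limit. By monotone convergence $u_n\uparrow u$ a.e.\ and in $L^2(\Omega)$, with $u\ge C(\Omega')$ a.e.\ in each $\Omega'\Subset\Omega$; the local bounds give $u\in H^{1,\lambda}_{\mathrm{loc}}(\Omega)$ and $\sqrt A\,\nabla^{*}u_n\rightharpoonup\sqrt A\,\nabla^{*}u$ weakly in $L^2(\Omega')^N$, while the global bound gives $u_n^{(\nu+1)/2}\rightharpoonup u^{(\nu+1)/2}$ in $H^{1,\lambda}_0(\Omega)$, so $u^{(\nu+1)/2}\in H^{1,\lambda}_0(\Omega)$. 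For fixed $v\in C^1_0(\Omega)$ with $\operatorname{supp}v\subset\Omega'\Subset\Omega$, the left-hand side $\int_\Omega\langle A\nabla^{*}u_n,\nabla v\rangle\,dX$ converges to $\int_\Omega\langle A\nabla^{*}u,\nabla v\rangle\,dX$ by the weak convergence (note $\sqrt A\,\nabla v\in L^2(\Omega')^N$), while on the right $\frac{f_nv}{(u_n+\tfrac1n)^\nu}\to\frac{fv}{u^\nu}$ pointwise a.e.\ with the uniform domination $\bigl|\frac{f_nv}{(u_n+\tfrac1n)^\nu}\bigr|\le C(\Omega')^{-\nu}\|v\|_\infty\,f\,\chi_{\Omega'}\in L^1(\Omega)$, so dominated convergence completes the weak formulation; thus $u$ solves (\ref{maineq}) in the sense of Definition~\ref{def2}.

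I expect the real obstacle to be the uniform local positivity $u_n\ge C(\Omega')$: this is precisely the step where the Euclidean argument breaks down and must be replaced by the strong maximum principle of Theorem~\ref{smp}, hence by the standing hypotheses $\lambda\ge1$ and $\Lambda$-connectedness of $\Omega$. Everything else is a careful transcription of the classical proof, the only other delicate points being the chain and product rules for the quasi-gradient $\nabla^{*}$ (handled via Theorem~\ref{REP} and $|\Sigma|=0$) and the bookkeeping that keeps every test function inside $H^{1,\lambda}_0(\Omega)$.
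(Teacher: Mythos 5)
Your proposal is correct and follows essentially the same route as the paper: the same approximation $-\Delta_\lambda u_n=f_n(u_n+\tfrac1n)^{-\nu}$ with a fixed-point existence argument, Stampacchia-type $L^\infty$ bound, monotonicity plus the strong maximum principle (Theorem \ref{smp}) for the uniform local lower bound, the test functions $u_n^\nu$ (global bound on $u_n^{(\nu+1)/2}$ in $H^{1,\lambda}_0(\Omega)$) and $u_n\eta^2$ (local $H^{1,\lambda}$ bound), and the limit passage via weak convergence and dominated convergence. The only deviations are cosmetic (Schauder versus Schaefer for the approximate problems, and deducing the $L^2$ bound from Poincar\'e and $L^{\nu+1}$ rather than from the Sobolev embedding into $L^{s}$ with $s=\frac{(\nu+1)Q}{Q-2}$ as in Lemma \ref{L2}), so the argument matches the paper's proof.
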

\begin{theorem}\label{Th4}
  Let $\nu>1$ and $f\in L^r(\Omega),\; r\geq1$. Then the solution $u$ of (\ref{maineq}) given by the above theorem is such that 
  \begin{enumerate}[label=(\roman*)]
      \item If $r>\frac{Q}{2}$ then $u\in L^\infty(\Omega).$
      \item If $1\leq r<\frac{Q}{2}$ then $u\in L^s(\Omega).$
  \end{enumerate}
  where $s=\frac{Qr(\nu+1)}{(Q-2r)}$ and $Q=(m+1)+\lambda m.$
 \end{theorem}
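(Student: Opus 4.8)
The plan is to derive $n$-uniform a priori bounds for the approximating sequence $\{u_n\}$ that produces the solution $u$ in Theorem \ref{Th3}: $u_n\in H^{1,\lambda}_0(\Omega)\cap L^\infty(\Omega)$ solves $-\Delta_\lambda u_n=f_n/(u_n+\tfrac1n)^\nu$ with $f_n=\min\{f,n\}$, the sequence is increasing with $u_n\to u$ a.e., and since $f_n/(u_n+\tfrac1n)^\nu\in L^\infty(\Omega)$ the weak formulation for $u_n$ is valid against every $v\in H^{1,\lambda}_0(\Omega)$. Both integrability statements then pass to the limit by Fatou's lemma once the bounds are uniform in $n$.

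\textbf{Proof of (i).} Fix $k\ge1$ and test the equation for $u_n$ with $(u_n-k)^+\in H^{1,\lambda}_0(\Omega)$. On $A_k:=\{u_n>k\}$ one has $(u_n+\tfrac1n)^\nu\ge u_n^\nu\ge k^\nu\ge1$, so $f_n/(u_n+\tfrac1n)^\nu\le f$ there and hence
$$\int_\Omega\langle A\nabla^*(u_n-k)^+,\nabla^*(u_n-k)^+\rangle\,dX\le\int_{A_k}f\,(u_n-k)^+\,dX .$$
Combining the Sobolev embedding of Theorem \ref{emb1} with the Poincar\'{e} inequality on the left, and H\"{o}lder's inequality with exponents $r$, $2^*_\lambda$ and their conjugate on the right, gives $\|(u_n-k)^+\|_{L^{2^*_\lambda}}\le C\|f\|_{L^r}|A_k|^{1-\frac1r-\frac1{2^*_\lambda}}$ with $C$ depending only on $\Omega$ and $Q$. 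Since $(h-k)^{2^*_\lambda}|A_h|\le\|(u_n-k)^+\|_{L^{2^*_\lambda}}^{2^*_\lambda}$ for $h>k$, this yields
$$|A_h|\le\frac{(C\|f\|_{L^r})^{2^*_\lambda}}{(h-k)^{2^*_\lambda}}\,|A_k|^{\beta},\qquad \beta:=2^*_\lambda\Big(1-\tfrac1r\Big)-1 ,$$
and $\beta>1$ is equivalent to $r>Q/2$. Theorem \ref{stam} with $k_0=1$ and $\phi(k)=|A_k|$ (note $|A_1|\le|\Omega|$) then produces $d>0$ depending only on $C,\|f\|_{L^r},|\Omega|,Q,r$ — not on $n$ — with $|A_{1+d}|=0$; thus $\|u_n\|_{L^\infty(\Omega)}\le1+d$, and letting $n\to\infty$ gives $u\in L^\infty(\Omega)$.

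\textbf{Proof of (ii).} Here I would test the equation for $u_n$ with the power $v=u_n^{\theta}$, where
$$\theta:=\frac{r(\nu+1)(Q-2)}{Q-2r}-1 .$$
An elementary computation shows $\theta\ge1$ for every $r\ge1$ (indeed $\theta=\nu$ if $r=1$ and $\theta>\nu$ if $r>1$), so $v\in H^{1,\lambda}_0(\Omega)\cap L^\infty(\Omega)$ and the quasi-gradient chain rule gives $\langle A\nabla^*u_n,\nabla^*v\rangle=\theta u_n^{\theta-1}\langle A\nabla^*u_n,\nabla^*u_n\rangle=\tfrac{4\theta}{(\theta+1)^2}\langle A\nabla^*w_n,\nabla^*w_n\rangle$ with $w_n:=u_n^{(\theta+1)/2}\in H^{1,\lambda}_0(\Omega)$. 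Since $u_n^{\theta}/(u_n+\tfrac1n)^\nu\le u_n^{\theta-\nu}$, applying the Sobolev--Poincar\'{e} inequality to $w_n$ on the left and H\"{o}lder's inequality with exponent $r$ on the right gives
$$c\,\|u_n\|_{L^{(\theta+1)2^*_\lambda/2}}^{\theta+1}\le C\,\|f\|_{L^r}\,\|u_n\|_{L^{(\theta-\nu)r'}}^{\theta-\nu} .$$
The choice of $\theta$ is exactly the one making $(\theta+1)\tfrac{2^*_\lambda}{2}=(\theta-\nu)r'=s$ with $s=\tfrac{Qr(\nu+1)}{Q-2r}$; as $u_n\in L^\infty(\Omega)$ and $\Omega$ is bounded, $0<\|u_n\|_{L^s}<\infty$, so dividing yields $\|u_n\|_{L^s}^{\nu+1}\le C\|f\|_{L^r}$ uniformly in $n$ (for $r=1$ the right-hand side is simply $\int_\Omega f_n\le\|f\|_{L^1}$, with the same outcome). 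Fatou's lemma then gives $u\in L^s(\Omega)$.

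\textbf{Main obstacle.} The analytic engine — De Giorgi--Stampacchia iteration for (i) and a single power test function for (ii) — is routine; the delicate points are, first, justifying that $(u_n-k)^+$ and $u_n^{\theta}$ are admissible test functions and that the quasi-gradient chain rule $\nabla^*(u_n^{(\theta+1)/2})=\tfrac{\theta+1}{2}u_n^{(\theta-1)/2}\nabla^*u_n$ holds in the completion-based space $H^{1,\lambda}_0(\Omega)$ (this uses $u_n\in L^\infty(\Omega)$ and approximation by $C^1_0$ functions, in the spirit of Theorem \ref{REP}), and second, the bookkeeping of exponents so that the estimate closes at precisely the stated $s$ and the threshold $Q/2$, while checking that every constant generated is independent of $n$, which is what legitimizes the passage $n\to\infty$.
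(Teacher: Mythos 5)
Your proposal is correct and follows essentially the same route as the paper: part (i) is the paper's Stampacchia level-set iteration on the approximations $u_n$ (uniform in $n$), and part (ii) is the paper's power test function $u_n^{2\delta-1}$ with your $\theta=2\delta-1$ and the identical exponent-matching choice $(\theta+1)\tfrac{2^*_\lambda}{2}=(\theta-\nu)r'=s$, followed by passage to the limit. The only cosmetic differences are that you treat $r=1$ inside the same scheme (the paper instead quotes the bound on $u^{\frac{\nu+1}{2}}$ in $H^{1,\lambda}_0(\Omega)$ from Lemma \ref{L2}) and you use Fatou rather than the paper's dominated convergence.
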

 \subsection{The case $\nu<1$}
 \begin{theorem}\label{Th5}
Let $\nu<1$ and $f\in L^r(\Omega),r=(\frac{2^*_\lambda}{1-\lambda})'$. Then (\ref{maineq}) has a unique solution in $H^{1,\lambda}_0(\Omega)$.
\end{theorem}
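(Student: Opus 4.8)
The plan is to adapt the Boccardo--Orsina approximation scheme to the degenerate setting. For $n\in\N$ set $f_n=\min\{f,n\}$ and consider the regularized problems $-\Delta_\lambda u_n=f_n/(u_n+\tfrac1n)^\nu$ in $\Omega$, $u_n=0$ on $\partial\Omega$. Since, as a function of the unknown, the right-hand side is bounded by $n^{1+\nu}$, continuous, and nonincreasing, Schauder's fixed point theorem applied to the map $w\mapsto u$, where $u\in H^{1,\lambda}_0(\Omega)$ is the unique solution of $-\Delta_\lambda u=f_n/(|w|+\tfrac1n)^\nu$ (by Lax--Milgram, the form $(u,v)\mapsto\int_\Omega\langle A\nabla^* u,\nabla^* v\rangle\,dX$ being coercive on $H^{1,\lambda}_0(\Omega)$ by the Poincar\'e inequality recalled above, and the datum lying in $L^\infty(\Omega)\subset L^{(2^*_\lambda)'}(\Omega)$), produces a solution $u_n$; testing with $u_n^-$ gives $u_n\geq0$, and a Stampacchia iteration using Theorem~\ref{stam} and the embedding of Theorem~\ref{emb1} gives $u_n\in L^\infty(\Omega)$. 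Testing the difference of the equations for $u_n$ and $u_{n+1}$ with $(u_n-u_{n+1})^+$, and using $f_n\leq f_{n+1}$ together with the monotonicity of $s\mapsto s^{-\nu}$, shows $u_n\leq u_{n+1}$.

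I would then establish a lower bound on $u_n$ on compact subsets, uniform in $n$. As $f\not\equiv0$ we have $f_1\not\equiv0$, so $u_1$ is a nonnegative, not identically zero supersolution of $-\Delta_\lambda u=0$; by the Strong Maximum Principle (Theorem~\ref{smp}, whose hypotheses $\lambda\geq1$ and $\Lambda$-connectedness of $\Omega$ are in force) $\inf_B u_1>0$ on every ball $B\Subset\Omega$, and covering a compact set by finitely many balls yields $\inf_{\Omega'}u_1=:C(\Omega')>0$ for every $\Omega'\Subset\Omega$; by monotonicity $u_n\geq u_1\geq C(\Omega')$ on $\Omega'$ for all $n$. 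For the uniform energy bound I would test the equation for $u_n$ with $u_n$: $\int_\Omega\langle A\nabla^* u_n,\nabla^* u_n\rangle\,dX=\int_\Omega f_nu_n/(u_n+\tfrac1n)^\nu\,dX\leq\int_\Omega f\,u_n^{1-\nu}\,dX$, then apply H\"older with conjugate exponents $r=\bigl(\tfrac{2^*_\lambda}{1-\nu}\bigr)'$ and $\tfrac{2^*_\lambda}{1-\nu}$, followed by the Sobolev embedding $H^{1,\lambda}_0(\Omega)\hookrightarrow L^{2^*_\lambda}(\Omega)$, to obtain $\|u_n\|_0^2\leq C\|f\|_{L^r(\Omega)}\|u_n\|_0^{1-\nu}$, i.e.\ $\|u_n\|_0\leq C$ independently of $n$. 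This is exactly where the hypothesis $r=\bigl(\tfrac{2^*_\lambda}{1-\nu}\bigr)'$ enters (note $r>1$, so $f\in L^r(\Omega)\subset L^1(\Omega)$).

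Now I would pass to the limit. Being monotone and bounded in the Hilbert space $H^{1,\lambda}_0(\Omega)$, the sequence $u_n$ increases a.e.\ to some $u$, converges to $u$ strongly in $L^q(\Omega)$ for every $q<2^*_\lambda$ by the compact embedding of Theorem~\ref{emb1}, and satisfies $\sqrt A\,\nabla^* u_n\rightharpoonup\sqrt A\,\nabla^* u$ weakly in $(L^2(\Omega))^N$; in particular $u\in H^{1,\lambda}_0(\Omega)$ and $u\geq C(\Omega')>0$ on every $\Omega'\Subset\Omega$. Fixing $v\in C^1_0(\Omega)$ with $K:=\mathrm{supp}\,v\Subset\Omega$ and a constant $c_K>0$ with $u_n\geq u_1\geq c_K$ on $K$, the left-hand sides $\int_\Omega\langle\sqrt A\,\nabla^* u_n,\sqrt A\,\nabla v\rangle\,dX$ converge to $\int_\Omega\langle A\nabla^* u,\nabla v\rangle\,dX$ by weak convergence (since $\sqrt A\,\nabla v\in(L^2(\Omega))^N$), while $0\leq f_nv/(u_n+\tfrac1n)^\nu\leq f|v|/c_K^\nu\in L^1(\Omega)$ and the integrand converges a.e.\ to $fv/u^\nu$, so dominated convergence gives $\int_\Omega fv/u^\nu\,dX$ in the limit. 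Hence $u$ solves (\ref{maineq}) in the sense of Definition~\ref{def2}, and since $\nu<1$ we indeed have $u\in H^{1,\lambda}_0(\Omega)$.

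Finally, for uniqueness, if $u,w\in H^{1,\lambda}_0(\Omega)$ are two solutions I would use $(u-w)^+$ as a test function --- admissible after a density/truncation argument, using that $H^{1,\lambda}_0(\Omega)$ is stable under positive parts and truncations and that, by the energy bound, the singular terms are integrable against $\min\{(u-w)^+,k\}$ --- to get $\int_{\{u>w\}}\langle A\nabla^*(u-w),\nabla^*(u-w)\rangle\,dX=\int_\Omega f\bigl(u^{-\nu}-w^{-\nu}\bigr)(u-w)^+\,dX\leq0$, because $u^{-\nu}\leq w^{-\nu}$ on $\{u>w\}$; the left-hand side being nonnegative, it vanishes, so $\nabla^*(u-w)^+=0$ and the Poincar\'e inequality forces $(u-w)^+=0$, whence $u\leq w$; by symmetry $u=w$. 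I expect two points to be the real obstacles: the uniform \emph{local} lower bound for the $u_n$, which is precisely what compels us to invoke the Grushin Strong Maximum Principle (and hence the restriction $\lambda\geq1$ and the $\Lambda$-connectedness of $\Omega$); and making the test-function manipulations in the monotonicity and uniqueness steps rigorous in the weighted degenerate space, where a priori only the $C^1_0(\Omega)$ formulation is available, so that these passages must be controlled through the quasi-gradient representation of Theorem~\ref{REP} together with the Poincar\'e inequality.
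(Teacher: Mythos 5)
Your proposal is correct and takes essentially the same route as the paper: the approximation $-\Delta_\lambda u_n=f_n/(u_n+\tfrac{1}{n})^\nu$ solved by a fixed point argument, monotonicity and the uniform local lower bound via the Grushin Strong Maximum Principle, the uniform $H^{1,\lambda}_0(\Omega)$ bound obtained by testing with $u_n$ and H\"older with exponent $r=(\tfrac{2^*_\lambda}{1-\nu})'$ (which is also the intended reading of the hypothesis; the statement's ``$1-\lambda$'' is a typo), then weak convergence plus dominated convergence on $\mathrm{supp}\,v$ to pass to the limit, and uniqueness via $(u-w)^+$ and Poincar\'e. This is precisely the paper's argument, assembled from its approximation lemmas, Lemma \ref{L3}, and the uniqueness step of Theorem \ref{Th1}.
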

 
\begin{theorem}\label{Th6}
Let $\nu<1$ and $f\in L^r(\Omega),\; r\geq (\frac{2^*_\lambda}{1-\nu})'$. Then the solution $u$ of (\ref{maineq}) given by the above theorem is such that
\begin{enumerate}[label=(\roman*)]
    \item If $r>\frac{Q}{2}$ then $u\in L^\infty(\Omega).$\\
    \item If $(\frac{2^*_\lambda}{1-\nu})'\leq r<\frac{Q}{2}$ then $u\in L^s(\Omega).$
\end{enumerate}
where $s=\frac{Qr(\nu+1)}{(Q-2r)},\; Q=(m+1)+\lambda m$ and $r'$ denotes the H\"{o}lder conjugate of $r$.
\end{theorem}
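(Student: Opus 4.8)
The plan is to obtain both integrability statements by testing the weak formulation of (\ref{maineq}) against suitable truncations and powers of the solution $u$ furnished by Theorem \ref{Th5}, after extending the validity of the weak formulation from $C^1_0(\Omega)$ to nonnegative $v\in H^{1,\lambda}_0(\Omega)\cap L^\infty(\Omega)$ by density (a routine step, available from the approximation scheme producing $u$). The starting point is that $u\in H^{1,\lambda}_0(\Omega)\subset L^{2^*_\lambda}(\Omega)$ by Theorem \ref{emb1}, and that for $\nu<1$ the singular term is tame: on $\{u>k\}$ with $k\geq 1$ one has $u^{-\nu}\leq 1$, while the exponent $a_0:=\nu+2^*_\lambda/r'$ satisfies $a_0\geq 1$ exactly when $r\geq(2^*_\lambda/(1-\nu))'$, which is precisely the standing hypothesis. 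Throughout I use that the quasi-gradient $\nabla^*$ obeys the usual chain and truncation rules, which is legitimate because $\Sigma=\{x=0\}$ is Lebesgue-null and, by Theorem \ref{REP}, $\nabla^*$ coincides with the classical weak gradient off $\Sigma$.

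For (i), I will run a Stampacchia iteration. Fix $k\geq 1$, put $A_k=\{u>k\}$, and test with $G_k(u)=(u-k)^+\in H^{1,\lambda}_0(\Omega)$: the left-hand side equals $\int_\Omega\langle A\nabla^*G_k(u),\nabla^*G_k(u)\rangle\,dX$, which by the Poincar\'e inequality and the embedding $H^{1,\lambda}_0(\Omega)\hookrightarrow L^{2^*_\lambda}(\Omega)$ bounds $c\,\|G_k(u)\|_{L^{2^*_\lambda}}^2$ from below, while the right-hand side is at most $\int_{A_k}f\,G_k(u)\,dX\leq\|f\|_{L^r}\|G_k(u)\|_{L^{2^*_\lambda}}|A_k|^{1/r'-1/2^*_\lambda}$ by H\"older (valid because $r>Q/2>(2^*_\lambda)'$). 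Hence $\|G_k(u)\|_{L^{2^*_\lambda}}\leq C\|f\|_{L^r}|A_k|^{1/r'-1/2^*_\lambda}$, and since $(h-k)|A_h|^{1/2^*_\lambda}\leq\|G_k(u)\|_{L^{2^*_\lambda}}$ for $h>k$, the function $\phi(h):=|A_h|$ satisfies $\phi(h)\leq C(h-k)^{-2^*_\lambda}\phi(k)^{2^*_\lambda/r'-1}$. Because $(2^*_\lambda/2)'=Q/2$, the hypothesis $r>Q/2$ makes $\beta:=2^*_\lambda/r'-1>1$, so Theorem \ref{stam} (with $k_0=1$, $\alpha=2^*_\lambda$) gives $\phi(1+d)=0$ for a finite $d$; thus $u\leq 1+d$ a.e., i.e. $u\in L^\infty(\Omega)$.

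For (ii), I will bootstrap with power test functions. For $a\geq 1$ and $n\in\N$ let $G_nu=\min(u,n)$ and test with $(G_nu)^a$; writing $w=(G_nu)^{(a+1)/2}\in H^{1,\lambda}_0(\Omega)$, the left-hand side equals $\frac{4a}{(a+1)^2}\int_\Omega\langle A\nabla^*w,\nabla^*w\rangle\,dX$, and the right-hand side is at most $\int_\Omega f\,u^{a-\nu}\,dX\leq\|f\|_{L^r}\|u\|_{L^{(a-\nu)r'}}^{a-\nu}$ (here $a-\nu>0$). Applying Poincar\'e and the embedding to $w$ and letting $n\to\infty$ by monotone convergence yields
\begin{equation*}
\|u\|_{L^{(a+1)2^*_\lambda/2}}^{a+1}\leq C\,\frac{(a+1)^2}{4a}\,\|f\|_{L^r}\,\|u\|_{L^{(a-\nu)r'}}^{a-\nu}.
\end{equation*}
Starting from $u\in L^{2^*_\lambda}(\Omega)$, set $a_0=\nu+2^*_\lambda/r'\geq 1$ and iterate with $a_{k+1}=\nu+p_{k+1}/r'$, $p_{k+1}=(a_k+1)2^*_\lambda/2$, so that $p_{k+1}=\theta p_k+\tfrac{(\nu+1)2^*_\lambda}{2}$ with $\theta=2^*_\lambda/(2r')$. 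Since $r<Q/2$ forces $\theta<1$, the $p_k$ increase to the fixed point $p^*=\tfrac{(\nu+1)2^*_\lambda r'}{2r'-2^*_\lambda}$, which an elementary computation with $2^*_\lambda=2Q/(Q-2)$ and $r'=r/(r-1)$ identifies with $s=\tfrac{Qr(\nu+1)}{Q-2r}$. The $a_k$ stay in a bounded subinterval of $[1,\infty)$, so the constants $C(a_k+1)^2/(4a_k)$ are uniformly bounded and $\tfrac{a_k-\nu}{a_k+1}$ stays below some $\sigma<1$; taking logarithms in the displayed estimate along the iteration then gives $\sup_k\|u\|_{L^{p_k}}<\infty$, whence Fatou's lemma (using $p_k\uparrow s$) gives $u\in L^s(\Omega)$.

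The step I expect to be the main obstacle is the closure of (ii) at the endpoint: the recursion by itself only produces $u\in L^{p}$ for each $p<s$, and to land in $L^s$ one genuinely needs the uniform control of the iteration constants and of $\|u\|_{L^{p_k}}$ together with Fatou, as indicated. The remaining points are bookkeeping: making the power test functions $(G_nu)^a$ rigorous in the quasi-gradient setting, justifying the density extension of the admissible test-function class, and verifying the two identities $(2^*_\lambda/2)'=Q/2$ and $p^*=s$, on which the sharpness of the thresholds $r>Q/2$ in (i) and $r\geq(2^*_\lambda/(1-\nu))'$ in (ii) depends; note that $r=(2^*_\lambda/(1-\nu))'$ makes $a_0=1$ and $s=2^*_\lambda$, so (ii) is then just the statement of Theorem \ref{Th5}.
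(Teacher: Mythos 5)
Your computations (the Stampacchia iteration in (i), the power-function bootstrap and the identification of the fixed point $p^*=s$ in (ii)) are sound, but the foundation you build them on has a genuine gap: you test the \emph{limit} equation with functions such as $G_k(u)=(u-k)^+$ and $(\min(u,n))^a$, and you justify this by asserting that the weak formulation of Definition \ref{def2} extends from $C^1_0(\Omega)$ to nonnegative $v\in H^{1,\lambda}_0(\Omega)\cap L^\infty(\Omega)$ ``by density, available from the approximation scheme.'' That extension is not routine here, and it is exactly the point the paper is careful to avoid. If you pass to the limit in $\int_\Omega f_n v/(u_n+\tfrac1n)^\nu\,dX$ for a nonnegative $v$ that is \emph{not} compactly supported, Fatou only gives $\int_\Omega\langle A\nabla^*u,\nabla^*v\rangle\,dX\ \geq\ \int_\Omega fv/u^\nu\,dX$, i.e.\ the supersolution inequality; your estimates need the opposite inequality, and for that you would need a dominating function for $f_nv/(u_n+\tfrac1n)^\nu$ up to the boundary. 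Lemma \ref{lem1} only gives $u_n\geq C(\Omega')$ on compact subsets, and in the Grushin setting no Hopf-type boundary estimate (e.g.\ $u_1\gtrsim$ a power of the distance) is available to make $fv/u_1^\nu$ integrable for a general bounded $v\in H^{1,\lambda}_0$. The paper's proof sidesteps this entirely: all a priori estimates are derived for the approximations $u_n\in H^{1,\lambda}_0(\Omega)\cap L^\infty(\Omega)$ from (\ref{weakd}), where the right-hand side is bounded and $v=u_n^{2\delta-1}$ (or $(u_n-k)^+$ as in Theorem \ref{Th2}) is a legitimate test function, and only the resulting uniform $L^s$ or $L^\infty$ bounds are passed to the limit by Fatou/dominated convergence. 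Your argument becomes correct verbatim once you run it at the level of $u_n$ instead of $u$; as written, the ``routine density step'' is unproved and is the crux.

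Beyond that, your route through an infinite bootstrap $p_k\uparrow s$ (with the uniform-constant bookkeeping and the Fatou endgame that you yourself flag as the main obstacle) is heavier than necessary. The paper makes a single choice of exponent: test with $u_n^{2\delta-1}$ where $\delta$ solves $2^*_\lambda\delta=(2\delta-\nu-1)r'$, so that the same quantity $\int_\Omega u_n^{s}\,dX$ appears on both sides, raised to the powers $2/2^*_\lambda$ on the left and $1/r'$ on the right; since $r<\tfrac Q2$ gives $2/2^*_\lambda>1/r'$ (and $r\geq(\tfrac{2^*_\lambda}{1-\nu})'$ gives $\delta\geq1$, finiteness being automatic because $u_n$ is bounded), the estimate self-absorbs and yields $\int_\Omega u_n^s\,dX\leq C$ uniformly in $n$ in one stroke, with no limiting sequence of exponents and no endpoint issue. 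Your endpoint discussion ($a_0=1$, $s=2^*_\lambda$ when $r=(\tfrac{2^*_\lambda}{1-\nu})'$, reducing to Theorem \ref{Th5}) agrees with the paper's treatment of that borderline case; part (i) is, modulo the same test-function caveat, the paper's Theorem \ref{Th2} argument.
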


\begin{theorem}\label{Th7}
Let $\nu<1$ and $f\in L^r(\Omega)$ for some $1\leq r<\frac{2Q}{(Q+2)+\nu(Q-2)}$. Then there exists $u\in W^{1,\lambda,q}_0(\Omega)$ which is a solution of (\ref{maineq}) in the sense 
\begin{align*}
    \int_\Omega\langle A\nabla^*u,\nabla v\rangle dX=\int_\Omega \frac{fv}{u^\nu}\;dX \mbox{ for all $v\in C^1_0(\Omega)$}
\end{align*}
where $q=\frac{Qr(\nu+1)}{Q-r(1-\nu)}$.
\end{theorem}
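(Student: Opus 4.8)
The plan is to obtain $u$ as the increasing limit of the solutions $u_n\in H^{1,\lambda}_0(\Omega)\cap L^\infty(\Omega)$ of the regularized problems $-\Delta_\lambda u_n=f_n(u_n+\tfrac1n)^{-\nu}$ in $\Omega$, $u_n=0$ on $\partial\Omega$, with $f_n=\min\{f,n\}$. Their existence follows from Schauder's fixed point theorem together with Lax--Milgram, the Poincar\'e inequality and the embedding of Theorem \ref{emb1} (the right-hand side is bounded for fixed $n$), exactly as in the construction underlying Theorems \ref{Th1}, \ref{Th3} and \ref{Th5}, and the comparison argument there (test the difference of two consecutive equations with $(u_n-u_{n+1})^{+}$, using that $t\mapsto f_n(t+\tfrac1n)^{-\nu}$ is nonincreasing and $f_n\le f_{n+1}$, $\tfrac1n>\tfrac1{n+1}$) gives $0\le u_n\le u_{n+1}$. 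Since $f\not\equiv0$, $u_1$ is a nontrivial nonnegative supersolution of $-\Delta_\lambda v=0$, so the Strong Maximum Principle (Theorem \ref{smp}, where the hypothesis $\lambda\ge1$ is used) yields $u_1>0$ in $\Omega$; hence for each $\Omega'\Subset\Omega$ there is $c(\Omega')>0$ with $u_n\ge u_1\ge c(\Omega')$ a.e.\ in $\Omega'$, so the singular term stays locally bounded uniformly in $n$.

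The heart of the matter is a uniform estimate of $u_n$ in $W^{1,\lambda,q}_0(\Omega)$. Put $\beta=\dfrac{(1+\nu)r(Q-2)}{2(Q-2r)}$ (so $\beta>\tfrac12$, and $\beta=\tfrac{\nu+1}{2}$ when $r=1$); this is admissible since the bound on $r$ forces $r<Q/2$. Testing the $n$-th equation with $(u_n+\tfrac1n)^{2\beta-1}-(\tfrac1n)^{2\beta-1}\in H^{1,\lambda}_0(\Omega)$, using the chain rule for $\nabla^{*}$, discarding a nonnegative term on the right and applying H\"older's inequality with exponent $r$ (for $r=1$ the right-hand side is controlled directly), one reaches
\[ \int_\Omega\langle A\nabla^{*}w_n,\nabla^{*}w_n\rangle\,dX\;\le\;C\|f\|_{L^r}\Big(\int_\Omega(u_n+\tfrac1n)^{\mu}\,dX\Big)^{1/r'},\qquad w_n:=(u_n+\tfrac1n)^{\beta}-(\tfrac1n)^{\beta}, \]
where $\mu=(2\beta-1-\nu)r'=\beta\,2^*_\lambda=\dfrac{Qr(\nu+1)}{Q-2r}>0$, the choice of $\beta$ being made precisely so that these three expressions for $\mu$ coincide. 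Feeding $w_n$ into the Sobolev embedding $H^{1,\lambda}_0(\Omega)\hookrightarrow L^{2^*_\lambda}(\Omega)$ of Theorem \ref{emb1} gives $\big(\int_\Omega(u_n+\tfrac1n)^{\mu}\,dX\big)^{2/2^*_\lambda}\le C\big(\int_\Omega(u_n+\tfrac1n)^{\mu}\,dX\big)^{1/r'}+C$, and since $r<Q/2$ is equivalent to $1/r'<2/2^*_\lambda$, this self-improving inequality forces $\|u_n\|_{L^\mu(\Omega)}\le C$ and then $\|w_n\|_{H^{1,\lambda}_0}\le C$, uniformly in $n$. Finally, from $\langle A\nabla^{*}u_n,\nabla^{*}u_n\rangle^{q/2}=(u_n+\tfrac1n)^{(1-\beta)q}\big[(u_n+\tfrac1n)^{2\beta-2}\langle A\nabla^{*}u_n,\nabla^{*}u_n\rangle\big]^{q/2}$ and H\"older with exponents $\tfrac2q$ and $\tfrac{2}{2-q}$ (the first factor being tamed by $\|w_n\|_{H^{1,\lambda}_0}$), one is reduced to bounding $\int_\Omega(u_n+\tfrac1n)^{2(1-\beta)q/(2-q)}\,dX$; taking $q$ with $2(1-\beta)q/(2-q)=\mu$ yields exactly $q=\dfrac{Qr(\nu+1)}{Q-r(1-\nu)}$, hence $\|u_n\|_{W^{1,\lambda,q}_0(\Omega)}\le C$ for all $n$. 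One checks that the hypothesis on $r$ is exactly equivalent to $q<2$ (this is why the solution need not lie in $H^{1,\lambda}_0(\Omega)$) and that in fact $1<q<2$, so $W^{1,\lambda,q}_0(\Omega)$ is reflexive.

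Passing to the limit is then routine. By reflexivity a subsequence satisfies $u_n\rightharpoonup u$ in $W^{1,\lambda,q}_0(\Omega)$; by weak continuity of the representation map of Theorem \ref{REP}, $u_n\rightharpoonup u$ in $L^q(\Omega)$ and $U_n:=\sqrt A\,\nabla^{*}u_n\rightharpoonup U:=\sqrt A\,\nabla^{*}u$ in $(L^q(\Omega))^N$, while monotone and dominated convergence identify $u=\sup_n u_n$, so $u>0$ in $\Omega$ with $u\ge c(\Omega')$ a.e.\ in every $\Omega'\Subset\Omega$. For fixed $v\in C^1_0(\Omega)$ one writes $\langle A\nabla^{*}u_n,\nabla v\rangle=\langle U_n,\sqrt A\,\nabla v\rangle$ with $\sqrt A\,\nabla v\in L^{q'}(\Omega)$ (so no division by $|x|^{\lambda}$ occurs), whence $\int_\Omega\langle A\nabla^{*}u_n,\nabla v\rangle\,dX\to\int_\Omega\langle A\nabla^{*}u,\nabla v\rangle\,dX$; on $\operatorname{supp}v$ the functions $f_n v(u_n+\tfrac1n)^{-\nu}$ converge a.e.\ to $fv\,u^{-\nu}$ and are dominated by $c(\operatorname{supp}v)^{-\nu}\|v\|_\infty f\in L^1(\Omega)$, so the right-hand sides converge to $\int_\Omega fv\,u^{-\nu}\,dX$. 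This gives the required identity, and $u\in W^{1,\lambda,q}_0(\Omega)$.

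The step I expect to be the main obstacle is the a priori estimate: one must choose $\beta$ (equivalently $\mu$) so that the H\"older step on the right, the Sobolev step on the left and the final interpolation for $\nabla^{*}u_n$ close simultaneously and land precisely on $q=\dfrac{Qr(\nu+1)}{Q-r(1-\nu)}$, while keeping every constant genuinely independent of $n$ (the $\tfrac1n$ shifts must be absorbed using $|\Omega|<\infty$ and $n\ge1$, e.g.\ $(u_n+\tfrac1n)^{\mu}\le 2^{\mu}(u_n^{\mu}+1)$) and separately treating the borderline $r=1$, where $r'=\infty$ forces the direct estimate with $\beta=\tfrac{\nu+1}{2}$. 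The only other point requiring the weighted-space formalism is the identification of the weak $L^q$ limit of the quasi-gradients $\nabla^{*}u_n$ with $\nabla^{*}u$, which is supplied by Theorem \ref{REP}.
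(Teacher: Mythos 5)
Your proposal is correct and follows essentially the same route as the paper: the same approximating problems, the same test functions $(u_n+\epsilon)^{2\delta-1}-\epsilon^{2\delta-1}$ with the same exponent $\delta=\beta=\frac{(1+\nu)r(Q-2)}{2(Q-2r)}$ chosen to close the Sobolev--H\"older loop, the same $L^{\mu}$ bound with $\mu=\frac{Qr(\nu+1)}{Q-2r}$, and the same final interpolation $(2-2\beta)q=(2-q)\mu$ giving the uniform $W^{1,\lambda,q}_0(\Omega)$ bound. The only differences are cosmetic (you take $\epsilon=\tfrac1n$ instead of letting $\epsilon\to0$, and you spell out the limit passage via reflexivity and Theorem \ref{REP}, which the paper leaves implicit), and your computations, including the equivalence of the hypothesis on $r$ with $q<2$, are accurate.
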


 \begin{center}
      \section{APPROXIMATION OF THE EQUATION (\ref{maineq})}\label{sec4}
 \end{center}

Let $f$ be a nonnegative (not identically zero) measurable function and $n\in N$. Let us consider the equation 
\begin{align}\label{equ}
    -\Delta_\lambda u_n&=\frac{f_n}{(u_n+\frac{1}{n})^\nu}
     \text{ in }\Omega\\
   u=0 &\text{ on } \partial\Omega\nonumber
\end{align}
where $f_n:=\min\{f,n\}$.\\
 \begin{lem}
 Equation (\ref{equ}) has a unique solution $u_n\in H^{1,\lambda}_0(\Omega)\cap L^\infty(\Omega)$.
 \end{lem}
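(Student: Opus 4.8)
The plan is to solve \eqref{equ} for fixed $n$ by a fixed-point / monotone iteration argument, exploiting that the right-hand side $\frac{f_n}{(u_n+\frac1n)^\nu}$ is bounded (since $f_n\le n$ and $u_n+\frac1n\ge\frac1n$, the nonlinearity is dominated by $n^{1+\nu}$) and, crucially, nonincreasing in $u_n$. First I would set up the linear theory: for a fixed $w\in L^2(\Omega)$ with $w\ge 0$, the problem $-\Delta_\lambda v = \frac{f_n}{(w+\frac1n)^\nu}$, $v=0$ on $\partial\Omega$, has a unique weak solution $v=:T(w)\in H^{1,\lambda}_0(\Omega)$ by Lax--Milgram on the Hilbert space $(H^{1,\lambda}_0(\Omega),\|\cdot\|_0)$ — here the Poincar\'e inequality (Theorem in Section \ref{sec2}) gives coercivity of the bilinear form $a(v,\phi)=\int_\Omega\langle A\nabla^* v,\nabla\phi\rangle\,dX$, and the source lies in $L^\infty(\Omega)\subset L^2(\Omega)\subset (H^{1,\lambda}_0(\Omega))^\ast$ via the compact embedding of Theorem \ref{emb1}. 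Boundedness $v\in L^\infty(\Omega)$ follows from a Stampacchia truncation argument (Theorem \ref{stam}) applied to the level sets of $v$, using $\|f_n/(w+\frac1n)^\nu\|_\infty\le n^{1+\nu}$ together with the embedding $H^{1,\lambda}_0\hookrightarrow L^{2^*_\lambda}$.

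Next I would show $T$ maps a suitable closed convex set into itself and is compact, then invoke Schauder's fixed point theorem. Concretely, the weak maximum principle (which is standard here: test with $v^-$) gives $T(w)\ge 0$ whenever $f_n\ge 0$, and moreover $0\le T(w)\le z$ where $z$ solves $-\Delta_\lambda z=n^{1+\nu}$, $z=0$ on $\partial\Omega$; so $T$ maps the order interval $\{0\le w\le z\}$ (a closed bounded convex subset of $L^2(\Omega)$) into itself. Compactness of $T\colon L^2\to L^2$ comes from the a priori bound $\|T(w)\|_0\le C\|f_n/(w+\tfrac1n)^\nu\|_{(2^*_\lambda)'}\le C(n)$ in $H^{1,\lambda}_0(\Omega)$ combined with the compact embedding $H^{1,\lambda}_0(\Omega)\hookrightarrow L^2(\Omega)$ from Theorem \ref{emb1}; continuity of $T$ in $L^2$ follows by dominated convergence applied to the source terms (the map $s\mapsto (s+\frac1n)^{-\nu}$ is continuous and bounded on $[0,\infty)$). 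Schauder then yields $u_n\in H^{1,\lambda}_0(\Omega)\cap L^\infty(\Omega)$ with $u_n=T(u_n)$, i.e. a weak solution of \eqref{equ}.

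For uniqueness I would argue by the monotonicity of the nonlinearity. If $u_n,\tilde u_n$ are two solutions, subtract the weak formulations and test with $(u_n-\tilde u_n)^+\in H^{1,\lambda}_0(\Omega)$: the left-hand side gives $\int_{\{u_n>\tilde u_n\}}\langle A\nabla^*(u_n-\tilde u_n),\nabla^*(u_n-\tilde u_n)\rangle\,dX\ge 0$, while the right-hand side is $\int_\Omega f_n\big[(u_n+\tfrac1n)^{-\nu}-(\tilde u_n+\tfrac1n)^{-\nu}\big](u_n-\tilde u_n)^+\,dX\le 0$ since $t\mapsto (t+\frac1n)^{-\nu}$ is nonincreasing and $f_n\ge0$. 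Hence $\nabla^*(u_n-\tilde u_n)^+=0$, and Poincar\'e forces $(u_n-\tilde u_n)^+=0$; by symmetry $u_n=\tilde u_n$. One technical point worth care: to test the weak formulation with $H^{1,\lambda}_0$-functions rather than just $C^1_0(\Omega)$ functions, I would note that $C^1_0(\Omega)$ is dense in $H^{1,\lambda}_0(\Omega)$ by definition of the latter as a completion, and the bounded source permits passing to the limit.

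The main obstacle I expect is not any single deep estimate but rather making the functional-analytic machinery run cleanly in the degenerate setting: one must be careful that Lax--Milgram, the weak maximum principle, and the Stampacchia iteration are all legitimate when $\nabla^*$ denotes the weak \emph{quasi}-gradient and $A$ degenerates on $\Sigma=\{x=0\}$. The representation theorem (Theorem \ref{REP}) and the Poincar\'e inequality are exactly what license these manipulations — coercivity of $a(\cdot,\cdot)$ on $H^{1,\lambda}_0(\Omega)$ is immediate from $a(v,v)=\|\sqrt A\nabla^* v\|_{L^2}^2=\|v\|_0^2\ge c\|v\|_{L^2}^2$ — but the Stampacchia argument requires knowing that truncations $(v-k)^+$ belong to $H^{1,\lambda}_0(\Omega)$ with quasi-gradient $\chi_{\{v>k\}}\nabla^* v$, a chain-rule-type fact that holds in these spaces and which I would either cite or verify via the approximating sequences. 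Once these foundational points are in place, the proof is a routine adaptation of the classical Laplacian argument (as in \cite{BLO}) to the Grushin framework.
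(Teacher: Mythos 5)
Your proposal is correct and follows the same skeleton as the paper's proof: freeze $w$, solve the linear degenerate problem with the bounded source $f_n/(w+\tfrac1n)^\nu\le n^{1+\nu}$ (the paper cites Monticelli--Payne's existence/uniqueness theorem, which is exactly the Lax--Milgram-plus-Poincar\'e argument you describe), get compactness of the solution map from the embedding $H^{1,\lambda}_0(\Omega)\hookrightarrow L^2(\Omega)$, produce a fixed point, prove $L^\infty$ by the Stampacchia level-set lemma, and prove uniqueness by testing with $(u_n-\tilde u_n)^+$ and the monotonicity of $t\mapsto(t+\tfrac1n)^{-\nu}$. The only divergence is the fixed-point step: you invoke Schauder on the order interval $\{0\le w\le z\}$, which requires constructing the supersolution $z$ of $-\Delta_\lambda z=n^{1+\nu}$ and a comparison principle to check invariance, whereas the paper defines $T(w)$ with $|w|$ in the denominator so that $T$ acts on all of $L^2(\Omega)$, applies Schaefer's theorem after a one-line a priori bound on $\{w=\lambda Tw\}$, and only afterwards uses the weak maximum principle to conclude $u_n\ge0$ and hence that the fixed point solves the original equation. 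Both routes rest on the same maximum principle from Monticelli--Payne; the paper's $|w|$ device is slightly leaner (no invariant set to verify), while yours gives the pointwise bound $u_n\le z$ as a by-product. Your closing caveat about truncations $(v-k)^+$ and powers of $u_n$ being admissible test functions in $H^{1,\lambda}_0(\Omega)$ (a chain-rule fact for quasi-gradients, justified through the defining approximating sequences) is well taken --- the paper glosses over this by ``treating'' such functions as elements of $C^1_0(\Omega)$, so being explicit there is, if anything, an improvement.
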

 \begin{proof}
 Let $w\in L^2(\Omega)$ be a fixed element. Now consider the equation 
 \begin{align}\label{P2}
    -\Delta_\lambda u&=g_n\text{ in }\Omega\\
    u&=0 \text{ on } \partial\Omega\nonumber
\end{align}
where $g_n=\frac{f_n}{(|w|+\frac{1}{n})^\nu}$. Since $|g_n(x)|\leq n^{\nu+1}$ one has $g_n\in L^2(\Omega)$.
By \cite[Theorem 4.4]{MP}, we can say equation (\ref{P2}) has a unique solution $u_w\in H^{1,\lambda}_0(\Omega)$ and the map $T:L^2(\Omega)\to H^{1,\lambda}_0(\Omega)$ such that $T(w)=u_w$ is continuous. By Theorem \ref{emb1}, we have the compact embedding $$H^{1,\lambda}_0(\Omega)\hookrightarrow L^2(\Omega).$$
Hence, the $T:L^2(\Omega)\to L^2(\Omega)$ is continuous as well as compact.\\
Let $S=\{w\in L^2(\Omega) : w=\lambda Tw \;\text{for some}\; 0\leq \lambda\leq 1\}$.\\
\textbf{ Claim:} The set $S$ is bounded. \\
 Let $w\in S$. By the Poincar\'{e} inequality (see \cite[Theorem 2.1]{MP}), there exists a constant $C>0$ such that, \\
 \begin{align*}
     \|u_w\|_{L^2(\Omega)}^2&\leq C\int_\Omega \langle A\nabla^*u_w,\nabla^*u_w\rangle \,dX
     =C\int_\Omega g_n(x)u_w\,dX
     \leq Cn^{\nu+1}\int_\Omega u_w\,dX
     \leq Cn^{\nu+1}|\Omega|^\frac{1}{2} \|u_w\|_{L^2(\Omega)}
\end{align*}
Hence, we have
\begin{align*}
    \|u_w\|_{L^2(\Omega)}&\leq Cn^{\nu+1}|\Omega|^\frac{1}{2}
\end{align*}
 where $C>0$ is a independent of $w$.  This proves $S$ is bounded. Hence by Schaefer's fixed point theorem, there exists $u_n\in H^{1,\lambda}_0(\Omega)$ such that
 \begin{align}
    -\Delta_\lambda u_n&=\frac{f_n}{(|u_n|+\frac{1}{n})^\nu}
     \text{ in }\Omega\\
   u=0 &\text{ on } \partial\Omega\nonumber
\end{align}
By Weak Maximum Principle (see \cite[\text{Theorem} 4.4]{MP}), we have $u_n\geq 0$ in $\Omega$. So $u_n$ is a solution of (\ref{equ}). Hence, 
\begin{equation}\label{weakd}
    \int_\Omega \langle A\nabla^*u_n,\nabla v\rangle dX=\int_\Omega \frac{f_nv}{(u_n+\frac{1}{n})^\nu}dX 
     \text{ for every }v\in C^1_0(\Omega)
\end{equation}
Now, we want to prove $u_n\in L^\infty(\Omega)$.\\
Let $k>1$ and define $ S(k)=\{x\in \Omega : u_n(x)\geq k\}$. We can treat the function \begin{equation*}
     v(x)= \begin{cases}
             u_n(x)-k & x\in S(k)\\
             o & \text{otherwise}
             \end{cases}
\end{equation*}
as a function in $C^1_0(\Omega)$. By putting $v$ in (\ref{weakd}), we obtain
\begin{align}\label{weakin}
    \int_{S(k)} \langle A\nabla^*v,\nabla^* v\rangle\,dX&=\int_{S(k)} \frac{f_nv}{(v+k+\frac{1}{n})^\nu}\,dX\nonumber
    \leq n^{\nu+1} \int_{S(k)}v\,dX\nonumber
    \leq n^{\nu+1} \|v\|_{L^{2^*_\lambda}(\Omega)} |S(k)|^{1-\frac{1}{2^*_\lambda}}
\end{align}
Here, $2^*_\lambda=\frac{2Q}{Q-2}$  and $Q=(m+1)+\lambda m$.
Now, by Theorem \ref{emb1} there exists $C>0$ such that 
\begin{align*}
    \|v\|_{L^{2^*_\lambda}(\Omega)}^2&\leq  C\int_{\Omega} \langle A\nabla^*v,\nabla^* v\rangle\,dX \nonumber
    =C\int_{S(k)} \langle A\nabla^*v,\nabla^* v\rangle\,dX \nonumber
    \leq Cn^{\nu+1}\|v\|_{L^{2^*_\lambda}(\Omega)} |S(k)|^{1-\frac{1}{2^*_\lambda}}.
    \nonumber
\end{align*}
 We have
\begin{equation}\label{S1}
     \|v\|_{L^{2^*_\lambda}(\Omega)}\leq  Cn^{\nu+1}|S(k)|^{1-\frac{1}{2^*_\lambda}}
\end{equation}
Assume $1<k<h$ and using Inequality (\ref{S1}) we get
\begin{align*}
    |S(h)|^\frac{1}{2^*_\lambda}(h-k)&=(\int_{S(h)}(h-k)^{2^*_\lambda}\,dX)^\frac{1}{2^*_\lambda}
    \leq (\int_{S(k)}(v(x))^{2^*_\lambda}\,dX)^\frac{1}{2^*_\lambda}
    \leq \|v\|_{L^{2^*_\lambda}(\Omega)}
    \leq Cn^{\nu+1}|S(k)|^{1-\frac{1}{2^*_\lambda}}
\end{align*}
The above two inequalities implies
\begin{align*}
    |S(h)|\leq (\frac{Cn^{\nu+1}}{(h-k)})^{2^*_\lambda})|S(k)|^{2^*_\lambda-1}
\end{align*}
Let $d^{2^*_\lambda}=(Cn^{\nu+1})^{2^*_\lambda})2^\frac{2^*_\lambda(2^*_\lambda-1)}{2^*_\lambda-2} |S(1)|^{2^*_\lambda-2}$ then by the Theorem \ref{stam}, we get $|S(1+d)|=0$. Hence, $u_n(x)\leq 1+d$ a.e in $\Omega$. We get a positive constant $C(n)$ such that $u_n\leq C(n)$ a.e in $\Omega$. Consequently, $u_n\in L^\infty(\Omega)$.\\
Let $u_n$ and $v_n$ be two solutions of (\ref{equ}). The function $w=(u_n-v_n)^+\in H^{1,\lambda}_0(\Omega)$ can be considered as a test function. It is clear that
\begin{align}\label{f1}
    [(v_n+\frac{1}{n})^\nu-(u_n+\frac{1}{n})^\nu]w\leq 0
\end{align}
Since $u_n$ and $v_n$ are two solutions of (\ref{equ}) so by putting $w$ in (\ref{weakd}) we get
\begin{align*}
    \int_\Omega \langle A\nabla^*u_n,\nabla^* w\rangle dX=\int_\Omega \frac{f_nw}{(u_n+\frac{1}{n})^\nu}dX \\
 \text{and}   \int_\Omega \langle A\nabla^*v_n,\nabla^* w\rangle dX=\int_\Omega \frac{f_nw}{(v_n+\frac{1}{n})^\nu}dX 
\end{align*}
Therefore,
\begin{align*}
   \int_\Omega \langle A\nabla^*(u_n-v_n),\nabla^* w\rangle\,dX &=\int_\Omega \frac{f_n[(v_n+\frac{1}{n})^\nu-(u_n+\frac{1}{n})^\nu]}{(u_n+\frac{1}{n})^\nu(v_n+\frac{1}{n})^\nu}w\,dX\\
\end{align*}
Using (\ref{f1}) we have 
\begin{align*}
    \int_\Omega \langle A\nabla^*w,\nabla^* w\rangle\,dX\leq 0
\end{align*}
Hence, $w=0$ and so $(u_n-v_n)\leq 0$. By a similar argument, we can prove that $(v_n-u_n)\leq 0$. Consequently, $u_n=v_n$ a.e in $\Omega$. 
 \end{proof}
 \begin{lem}\label{lem1}
 Let for each $n\in \N$, $u_n$ be the solution of (\ref{equ}). Then the sequence $\{u_n\}$ is an increasing sequence  and for each $\Omega'\Subset \Omega$, there exists a constant $C(\Omega')>0$ such that
 $$u_n(x)\geq C(\Omega')>0\;\mbox{ a.e}\; x\in\Omega'\;\mbox{ and for all}\; n\in\N$$
 \end{lem}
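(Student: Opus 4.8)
The plan is to prove the two assertions in turn, deducing the uniform positive lower bound from the monotonicity together with the Strong Maximum Principle of Theorem \ref{smp}.

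\emph{Monotonicity.} I would first show that $u_n\leq u_{n+1}$ a.e. in $\Omega$ for every $n\in\N$. Since $u_n\in H^{1,\lambda}_0(\Omega)\cap L^\infty(\Omega)$ and the coefficient $f_n(u_n+\tfrac1n)^{-\nu}$ is bounded (hence lies in $L^2(\Omega)$), both sides of (\ref{weakd}) are continuous with respect to convergence in $H^{1,\lambda}_0(\Omega)$, so by density (\ref{weakd}) holds for every test function in $H^{1,\lambda}_0(\Omega)$; in particular $w:=(u_n-u_{n+1})^{+}\in H^{1,\lambda}_0(\Omega)$ is admissible in the equations for both $u_n$ and $u_{n+1}$. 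Subtracting the two identities and using $\nabla^*w=\nabla^*(u_n-u_{n+1})$ on $\{u_n>u_{n+1}\}$ and $\nabla^*w=0$ elsewhere,
\begin{align*}
\int_\Omega\langle A\nabla^*w,\nabla^*w\rangle\,dX=\int_{\{u_n>u_{n+1}\}}\Big(\frac{f_n}{(u_n+\frac1n)^{\nu}}-\frac{f_{n+1}}{(u_{n+1}+\frac1{n+1})^{\nu}}\Big)\,w\,dX .
\end{align*}
On $\{u_n>u_{n+1}\}$ one has $u_n+\frac1n>u_{n+1}+\frac1{n+1}$, hence $(u_n+\frac1n)^{\nu}>(u_{n+1}+\frac1{n+1})^{\nu}$, while $f_n=\min\{f,n\}\leq\min\{f,n+1\}=f_{n+1}$; therefore the integrand is $\leq 0$, so $\int_\Omega\langle A\nabla^*w,\nabla^*w\rangle\,dX\leq 0$, and the Poincar\'e inequality gives $\|w\|_{L^2(\Omega)}=0$, i.e. $u_n\leq u_{n+1}$.

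\emph{Uniform positivity.} Because $\{u_n\}$ is increasing, $u_n\geq u_1$ a.e. for all $n$, so it suffices to bound $u_1$ away from zero on each $\Omega'\Subset\Omega$. As $f_1=\min\{f,1\}$ is nonnegative and not identically zero, $u_1$ is not identically zero: otherwise (\ref{weakd}) with $n=1$ would read $\int_\Omega f_1 v\,dX=0$ for every $v\in C^1_0(\Omega)$, which is impossible. Moreover $u_1$ is a nonnegative supersolution of $-\Delta_\lambda u=0$, since for nonnegative $v\in H^{1,\lambda}_0(\Omega)$ we have $\int_\Omega\langle A\nabla^*u_1,\nabla^*v\rangle\,dX=\int_\Omega f_1(u_1+1)^{-\nu}v\,dX\geq 0$. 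Hence Theorem \ref{smp} applies: if $\inf_{B_r(X_0)}u_1=0$ for some ball $B_r(X_0)\Subset\Omega$, then $u_1\equiv 0$, a contradiction; thus $\inf_{B_r(X_0)}u_1>0$ for every ball $B_r(X_0)\Subset\Omega$. Given $\Omega'\Subset\Omega$, cover the compact set $\overline{\Omega'}$ by finitely many balls $B_{r_1}(X_1),\dots,B_{r_k}(X_k)\Subset\Omega$ and put $C(\Omega'):=\min_{1\leq i\leq k}\inf_{B_{r_i}(X_i)}u_1>0$; then $u_n\geq u_1\geq C(\Omega')$ a.e. in $\Omega'$ for all $n$, as claimed.

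I expect the positivity statement to be the crux: it relies entirely on the Strong Maximum Principle (Theorem \ref{smp}) and hence on the weak Harnack inequality of Franchi--Lanconelli, which is exactly where the hypotheses $\lambda\geq 1$ and $\Lambda$-connectedness of $\Omega$ are genuinely used. The monotonicity step is comparatively routine, the only points needing care being the extension of the weak formulation (\ref{weakd}) to $H^{1,\lambda}_0(\Omega)$ test functions (so that $w=(u_n-u_{n+1})^{+}$ is admissible) and the sign of the right-hand side on the coincidence set $\{u_n>u_{n+1}\}$.
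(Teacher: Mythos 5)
Your proof is correct and follows essentially the same route as the paper: monotonicity by testing with $(u_n-u_{n+1})^+$ and the sign of $\frac{f_n}{(u_n+\frac1n)^\nu}-\frac{f_{n+1}}{(u_{n+1}+\frac1{n+1})^\nu}$ on $\{u_n>u_{n+1}\}$, then the uniform lower bound by applying the Strong Maximum Principle (Theorem \ref{smp}) to $u_1$ and covering $\overline{\Omega'}$ by finitely many balls. You merely make explicit some steps the paper leaves implicit (density of test functions, $f_n\le f_{n+1}$, the covering argument), so there is nothing substantively different to flag.
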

 \begin{proof}
 Let $n\in \N$ be fixed. Define $w=(u_n-u_{n+1})^+$. It is clear that $$[(u_{n+1}+\frac{1}{n+1})^\nu-(u_n+\frac{1}{n})^\nu]w\leq0.$$ $w$ can be considered as a test function.
 Arguing as in the proof of the previous theorem, we obtain $w=0$. Hence, $u_n-u_{n+1}\leq 0$ $\implies u_n\leq u_{n+1}$ a.e in $\Omega$ and for all $n\in\N$. Since $f$ is not identically zero so $f_i$ is not identically zero for some $i\in N$. Without loss of generality, we may assume that $f_1$ is not identically zero.\\
 Consider the equation
 \begin{align}
    -\Delta_\lambda u_1&=\frac{f_1}{(u_1+1)^\nu}\text{ in}\;\Omega\\
   u_1&=0 \text{ on } \partial\Omega\nonumber
\end{align}
 Since $f_1$ is not identically zero so $u_1$ is not identically zero. So by Theorem \ref{smp}, we have $u_1>0$ in $\Omega$. Hence, for every compact set $\Omega'\Subset \Omega$, there exists a constant $C(\Omega')>0$ such that $u_1\geq C(\Omega')$ a.e. in $\Omega'$. Monotonicity of the sequence implies that for every $n\in N$,
 \begin{align*}
     u_n\geq C(\Omega').
 \end{align*}
 \end{proof}
 
 \begin{center}
     \section{ A FEW AUXILIARY RESULTS}\label{sec5}
 \end{center}
 We start this section with the proof of a priori estimates on $u_n$.
 \begin{lem}\label{L1}
 Let $u_n$ be the solution of equation (\ref{equ}) with $\nu=1$ and assume $f\in L^1(\Omega)$ is a nonnegative function (not identically zero). Then the sequence $\{u_n\}$ is bounded in $H^{1,\lambda}_0(\Omega)$.
 \end{lem}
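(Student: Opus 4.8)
The plan is to test the weak formulation \eqref{weakd} with the solution $u_n$ itself, which is legitimate since $u_n\in H^{1,\lambda}_0(\Omega)\cap L^\infty(\Omega)$ and can be approximated by $C^1_0$ functions. With $\nu=1$ this gives
\[
\int_\Omega \langle A\nabla^*u_n,\nabla^*u_n\rangle\,dX=\int_\Omega \frac{f_n\,u_n}{u_n+\tfrac1n}\,dX.
\]
The key observation is that the integrand on the right is pointwise bounded by $f_n\le f$, because $\frac{u_n}{u_n+1/n}\le 1$ (here $u_n\ge 0$ is essential). Hence the right-hand side is at most $\int_\Omega f\,dX=\|f\|_{L^1(\Omega)}$, which is finite by hypothesis and independent of $n$. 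This immediately yields the uniform bound
\[
\int_\Omega \langle A\nabla^*u_n,\nabla^*u_n\rangle\,dX\le \|f\|_{L^1(\Omega)}.
\]

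To upgrade this to boundedness in the full norm $\|\cdot\|$ on $H^{1,\lambda}_0(\Omega)$, I would invoke the Poincar\'e inequality (Theorem, Monticelli--Payne) which bounds $\|u_n\|_{L^2(\Omega)}^2$ by $C_2\int_\Omega\langle A\nabla^*u_n,\nabla^*u_n\rangle\,dX$; equivalently one uses that $\|\cdot\|_0$ and $\|\cdot\|$ are equivalent norms on $H^{1,\lambda}_0(\Omega)$. Combining, $\|u_n\|_0^2\le\|f\|_{L^1(\Omega)}$ and therefore $\|u_n\|^2\le (1+C_2)\|f\|_{L^1(\Omega)}$ for every $n$, which is the claimed uniform bound.

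The only genuinely delicate point is the justification of using $u_n$ as a test function in \eqref{weakd}, since the definition of weak solution only quantifies over $v\in C^1_0(\Omega)$. I would handle this by a standard density/truncation argument: $u_n$ lies in $H^{1,\lambda}_0(\Omega)$, so there is a sequence $\{\phi_j\}\subset C^1_0(\Omega)$ with $\phi_j\to u_n$ in $\|\cdot\|_0$ (hence also in $L^2$ by Poincar\'e and, along a subsequence, a.e.); plugging $\phi_j$ into \eqref{weakd} and passing to the limit works on the left by the continuity of the bilinear form, while on the right one uses that $f_n/(u_n+1/n)^\nu\in L^\infty(\Omega)\subset L^2(\Omega)$ together with dominated convergence (the integrand is dominated by $n^{\nu+1}|\phi_j|$, controlled via $L^2$-convergence). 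Everything else is routine, so I do not expect any real obstacle beyond writing this limiting argument cleanly.
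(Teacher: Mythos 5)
Your proof is correct and follows essentially the same route as the paper: test the weak formulation with $u_n$ itself, use $f_n\,u_n/(u_n+\tfrac1n)\le f$ to get $\int_\Omega\langle A\nabla^*u_n,\nabla^*u_n\rangle\,dX\le\|f\|_{L^1(\Omega)}$, and conclude via the equivalence of $\|\cdot\|_0$ and $\|\cdot\|$ on $H^{1,\lambda}_0(\Omega)$. Your extra care in justifying $u_n$ as an admissible test function by density is a welcome refinement but does not change the argument.
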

 \begin{proof}
 Since $u_n\in H^{1,\lambda}_0(\Omega)$ is a solution of (\ref{equ}) so from (\ref{weakd}) we obtain \\
 \begin{align*}
     \int_\Omega \langle A\nabla^*u_n,\nabla^* u_n\rangle\,dX&=\int_\Omega \frac{f_nu_n}{(u_n+\frac{1}{n})}dX
     \leq \int_\Omega fdX
     =\|f\|_{L^1(\Omega)}
 \end{align*}
 Hence, $\{u_n\}$ is bounded in $H^{1,\lambda}_0(\Omega)$.
 \end{proof}
 \begin{lem}\label{L2}
 Let $u_n$ be the solution of the equation (\ref{equ}) with $\nu>1$ and $f\in L^1(\Omega)$ is a nonnegative function (not identically zero). Then $\{u_n^{\frac{\nu+1}{2}}\}$ is bounded in $H^{1,\lambda}_0(\Omega)$ and $\{u_n\}$ is bounded in $H^{1,\lambda}_\text{loc}(\Omega)$ and in $L^s(\Omega)$, where $s=\frac{(\nu+1)Q}{(Q-2)}$.
  \end{lem}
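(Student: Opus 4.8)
The plan is to adapt the Boccardo--Orsina argument, using crucially that $u_n\in L^\infty(\Omega)$ (from the previous lemma) so that composing $u_n$ with $C^1$ power functions that vanish at $0$ stays inside $H^{1,\lambda}_0(\Omega)$ and the chain rule $\nabla^*(\Phi(u_n))=\Phi'(u_n)\,\nabla^* u_n$ for the weak quasi-gradient is available through the representation of Theorem \ref{REP}.

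First I would test (\ref{weakd}) with $v_n:=(u_n+\tfrac1n)^\nu-(\tfrac1n)^\nu\in H^{1,\lambda}_0(\Omega)$. Since $\nabla^* v_n=\nu\,(u_n+\tfrac1n)^{\nu-1}\nabla^* u_n$, and since $0\le \big[(u_n+\tfrac1n)^\nu-(\tfrac1n)^\nu\big](u_n+\tfrac1n)^{-\nu}\le 1$ together with $f_n\le f$, this gives
\[
\nu\int_\Omega (u_n+\tfrac1n)^{\nu-1}\,\langle A\nabla^* u_n,\nabla^* u_n\rangle\,dX\ \le\ \|f\|_{L^1(\Omega)}.
\]
Because $\nu>1$ we have $u_n^{\nu-1}\le (u_n+\tfrac1n)^{\nu-1}$ pointwise, and $\nabla^*\big(u_n^{(\nu+1)/2}\big)=\tfrac{\nu+1}{2}\,u_n^{(\nu-1)/2}\nabla^* u_n$, hence
\[
\int_\Omega \langle A\nabla^*\big(u_n^{(\nu+1)/2}\big),\nabla^*\big(u_n^{(\nu+1)/2}\big)\rangle\,dX=\tfrac{(\nu+1)^2}{4}\int_\Omega u_n^{\nu-1}\langle A\nabla^* u_n,\nabla^* u_n\rangle\,dX\le \tfrac{(\nu+1)^2}{4\nu}\,\|f\|_{L^1(\Omega)}.
\]
By the Poincar\'e inequality this shows $\{u_n^{(\nu+1)/2}\}$ is bounded in $H^{1,\lambda}_0(\Omega)$ (note $u_n^{(\nu+1)/2}\in H^{1,\lambda}_0(\Omega)$ since $(\nu+1)/2>1$ and $u_n\in L^\infty(\Omega)$).

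Next I would invoke the embedding $H^{1,\lambda}_0(\Omega)\hookrightarrow L^{2^*_\lambda}(\Omega)$ from Theorem \ref{emb1}, with $2^*_\lambda=\frac{2Q}{Q-2}$: it yields a uniform bound on $\int_\Omega u_n^{\frac{\nu+1}{2}\,2^*_\lambda}\,dX$, i.e.\ $\{u_n\}$ is bounded in $L^s(\Omega)$ for $s=\tfrac{\nu+1}{2}\cdot\tfrac{2Q}{Q-2}=\tfrac{(\nu+1)Q}{Q-2}$; since $s>2$, in particular $\{u_n\}$ is bounded in $L^2(\Omega)$. For the local estimate, fix $\Omega'\Subset\Omega$; Lemma \ref{lem1} provides $C(\Omega')>0$ with $u_n\ge C(\Omega')$ a.e.\ on $\Omega'$, so (using $\nu-1>0$) $(u_n+\tfrac1n)^{\nu-1}\ge C(\Omega')^{\nu-1}$ on $\Omega'$, and the first displayed bound becomes $\int_{\Omega'}\langle A\nabla^* u_n,\nabla^* u_n\rangle\,dX\le \tfrac{1}{\nu}\,C(\Omega')^{1-\nu}\,\|f\|_{L^1(\Omega)}$. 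Together with the $L^2$ bound, $\{u_n\}$ is bounded in $H^{1,\lambda}(\Omega')$, hence in $H^{1,\lambda}_\text{loc}(\Omega)$.

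The main obstacle is entirely at the level of the weighted Sobolev calculus: one must justify that $v_n$ and $u_n^{(\nu+1)/2}$ actually lie in $H^{1,\lambda}_0(\Omega)$ and that the quasi-gradient chain rule holds for these compositions. The $L^\infty$ bound on $u_n$ is what makes this work --- the relevant power functions are then globally Lipschitz on the range of $u_n$ and vanish at $0$ --- but it still requires care with Theorem \ref{REP} and the density of $C^1_0(\Omega)$, as well as the (already implicitly used) fact that functions of $H^{1,\lambda}_0(\Omega)$ are admissible test functions in (\ref{weakd}).
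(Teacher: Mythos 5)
Your argument is correct, and its first half is essentially the paper's: the paper tests (\ref{weakd}) with $v=u_n^{\nu}$ and uses $f_n u_n^{\nu}(u_n+\tfrac1n)^{-\nu}\le f$, while you test with $(u_n+\tfrac1n)^{\nu}-(\tfrac1n)^{\nu}$ and use $0\le v_n (u_n+\tfrac1n)^{-\nu}\le 1$; both yield the same bound $\frac{(\nu+1)^2}{4\nu}\|f\|_{L^1(\Omega)}$ on $\int_\Omega\langle A\nabla^* u_n^{(\nu+1)/2},\nabla^* u_n^{(\nu+1)/2}\rangle\,dX$, and the $L^s$ bound then follows from Theorem \ref{emb1} exactly as in the paper. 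Where you genuinely diverge is the local estimate: the paper takes a cutoff $\eta\in C^\infty_0(\Omega)$ with $\eta\equiv 1$ on $\Omega'$, tests with $u_n\eta^2$, and absorbs the cross term via Young's inequality, which is the standard Boccardo--Orsina device and is why it needs the $L^2$ bound to control $\int_\Omega u_n^2\langle A\nabla\eta,\nabla\eta\rangle\,dX$; you instead restrict your global weighted energy estimate to $\Omega'$, using Lemma \ref{lem1} to bound $(u_n+\tfrac1n)^{\nu-1}\ge C(\Omega')^{\nu-1}$ there, which gives $\int_{\Omega'}\langle A\nabla^* u_n,\nabla^* u_n\rangle\,dX\le \nu^{-1}C(\Omega')^{1-\nu}\|f\|_{L^1(\Omega)}$ with no cutoff at all (you still need the $L^2$ bound, but only for the zeroth-order part of the $H^{1,\lambda}(\Omega')$ norm, and it comes from $s>2$ as in the paper). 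Your route is shorter and exploits the strict local positivity more directly, at the cost of being tied to $\nu>1$ (the weight $u_n^{\nu-1}$ degenerates for $\nu\le 1$), whereas the cutoff argument is the one that transfers to related situations; both proofs share the same technical caveats you flag, namely the chain rule for $\nabla^*$ applied to $C^1$ powers of the bounded function $u_n$ and the density argument allowing $H^{1,\lambda}_0\cap L^\infty$ test functions in (\ref{weakd}), which the paper also uses without further comment.
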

 \begin{proof}
 Since $\nu>1$ and $u_n\in H^{1,\lambda}_0(\Omega) $ so by putting $v=u_n^\nu$ in (\ref{weakd}) we have,
 \begin{align*}
     \int_\Omega\langle A\nabla^*u_n,\nabla^*u_n^\nu\rangle dX&=\int_\Omega \frac{f_nu_n^\nu}{(u_n+\frac{1}{n})^\nu}dX
      \leq\int_\Omega fdX.
 \end{align*}
 Now,
 \begin{align}\label{f2}
      \int_\Omega \langle A\nabla^*u_n^{\frac{\nu+1}{2}},\nabla^*u_n^{\frac{\nu+1}{2}}\rangle dX=\frac{(\nu+1)^2}{4\nu}\int_\Omega \nu u_n^{\nu-1}\langle A\nabla^*u_n,\nabla^*u_n\rangle dX&=\frac{(\nu+1)^2}{4\nu}\int_\Omega\langle A\nabla^*u_n,\nabla^*u_n^\nu\rangle dX\nonumber\\
     \leq \frac{(\nu+1)^2}{4\nu}\int_\Omega fdX.
 \end{align}
     Hence, $\{u_n^\frac{\nu+1}{2}\}$ is bounded in $H^{1,\lambda}_0(\Omega)$. By Theorem \ref{emb1}, there exists a constant $C>0$ such that 
     \begin{align*}
         \|u_n^{\frac{\nu+1}{2}}\|_{L^{2^*_\lambda}(\Omega)}\leq C\|u_n^{\frac{\nu+1}{2}}\|_{H^{1,\lambda}_0(\Omega)}
     \end{align*}
     By using (\ref{f2}), we have 
     \begin{align*}
          (\int_\Omega u_n^{2^*_\lambda\frac{(\nu+1)}{2}} dX)^\frac{2}{2^*_\lambda}\leq C\frac{(\nu+1)^2}{4\nu}\|f\|_{L^1(\Omega)}
     \end{align*}
     Since $s={2^*_\lambda\frac{(\nu+1)}{2}}$ so
\begin{align*}
    \int_\Omega u_n^s dX\leq (C\frac{(\nu+1)^2}{4\nu}\|f\|_{L^1(\Omega)})^\frac{2^*_\lambda}{2}
\end{align*}
 Hence, $\{u_n\}$ is bounded in $L^s(\Omega).$ To prove $\{u_n\}$ is bounded in $H^{1,\lambda}_\text{loc}(\Omega)$, let $\Omega'\Subset\Omega$ and $\eta\in C^\infty_0(\Omega)$ such that $0\leq\eta\leq1$ and 
     $\eta=1$ in $\Omega'$.
     It is a test function as $u_n\eta^2\in H^{1,\lambda}_0(\Omega)$. By Lemma \ref{lem1}, there exists a constant $C>0$ such that $u_n\geq C$ a.e in \text{supp}($\eta$). Put $v=u_n\eta^2$ in (\ref{weakd}) we have
     \begin{align}\label{w0}
       \int_\Omega\langle A\nabla^*u_n,\nabla^*(u_n\eta^2)\rangle dX=\int_\Omega \frac{f_nu_n\eta^2}{(u_n+\frac{1}{n})^\nu}dX
       \end{align}
       Also,
       \begin{align}\label{f5}
       \int_\Omega\langle A\nabla^*u_n,\nabla^*(u_n\eta^2)\rangle dX=\int_\Omega \{\eta^2\langle A\nabla^*u_n,\nabla^*u_n\rangle+2\eta u_n\langle A\nabla^*u_n,\nabla\eta\rangle\}
     \end{align}
     From (\ref{w0}) and (\ref{f5}) we get
     \begin{align}\label{f6}
       \int_\Omega \eta^2\langle A\nabla^*u_n,\nabla^*u_n\rangle dX= \int_\Omega \frac{f_n\eta^2}{C^{(\nu-1)}}dX-\int_\Omega2\eta u_n\langle A\nabla^*u_n,\nabla\eta\rangle dX
     \end{align}
     Choose  $\epsilon>0$ and use Young's inequality; one has
     \begin{align}\label{w8}
         |\int_\Omega2\eta u_n\langle A\nabla^*u_n,\nabla\eta\rangle dX|&\leq \int_\Omega2|\langle\eta\sqrt{A}\nabla^*u_n,u_n\sqrt{A}\nabla\eta\rangle|dX\nonumber\\
         &\leq \frac{1}{\epsilon}\int_\Omega \eta^2 |\sqrt{A}\nabla^*u_n|^2dX+\epsilon \int_\Omega u_n^2 |\sqrt{A}\nabla\eta|^2dX,
 \end{align}
 
 Put $\epsilon=2$ then we get
 \begin{align}\label{w9}
    |\int_\Omega2\eta u_n\langle A\nabla^*u_n,\nabla\eta\rangle dX|&\leq\frac{1}{2}\int_\Omega \eta^2 |\sqrt{A}\nabla^*u_n|^2dX+2\int_\Omega u_n^2 |\sqrt{A}\nabla\eta|^2dX \nonumber\\
    &=\frac{1}{2}\int_\Omega \eta^2 \langle A\nabla^*u_n,\nabla^*u_n\rangle dX+2\int_\Omega u_n^2 \langle A\nabla \eta,\nabla \eta\rangle dX 
 \end{align}
 Using (\ref{f6}) and (\ref{w9}), we have 
 \begin{align*}
     \int_\Omega \eta^2\langle A\nabla^*u_n,\nabla^*u_n\rangle dX&\leq 2\int_\Omega \frac{f\eta^2}{C^{(\nu-1)}}dX+4\int_\Omega u_n^2 \langle A\nabla \eta,\nabla \eta\rangle dX\\
     &\leq \frac{2\|\eta\|_\infty^2\|f\|_{L^1(\Omega)}}{C^{\nu-1}}+ 4\|\langle A\nabla \eta,\nabla \eta\rangle\|_\infty\int_\Omega u_n^2  dX
 \end{align*}
Since $\{u_n\}$ is bounded in $L^s(\Omega)$ and $s>2$ So $\{u_n\}$ is bounded in $L^2(\Omega)$.
\begin{align*}
     \int_\Omega \eta^2\langle A\nabla^*u_n,\nabla^*u_n\rangle dX&\leq \frac{2\|\eta\|_\infty^2\|f\|_{L^1(\Omega)}}{C^{\nu-1}}+4\|\langle A\nabla \eta,\nabla \eta\rangle\|_\infty\int_\Omega u_n^2  dX\\
     &\leq C(f,\eta)
\end{align*}
 Now, $$\int_{\Omega'}\langle A\nabla^*u_n,\nabla^*u_n\rangle dX\leq \int_\Omega \eta^2\langle A\nabla^*u_n,\nabla^*u_n\rangle dX\leq C(f,\eta)$$
 Hence, $\{u_n\}$ is bounded in $H^{1,\lambda}_\text{loc}(\Omega)$.
\end{proof}

\begin{lem}\label{L3}
Let $u_n$ be the solution of (\ref{equ}) with $\nu<1$ and $f\in L^r$, $r=(\frac{2^*_\lambda}{1-\nu})'$ is a nonnegative (not identically zero) function. Then $\{u_n\}$ is bounded in $H^{1,\lambda}_0(\Omega)$.
\end{lem}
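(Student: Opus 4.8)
The plan is to test the weak formulation (\ref{weakd}) against $u_n$ itself and then absorb the resulting power of the Sobolev norm, exactly as in Lemma \ref{L1} but with an extra application of H\"older's inequality to handle the factor $u_n^{1-\nu}$. Although (\ref{weakd}) is stated only for $v\in C^1_0(\Omega)$, since $u_n\in H^{1,\lambda}_0(\Omega)\cap L^\infty(\Omega)$ and $f_n/(u_n+\frac{1}{n})^\nu\le n^{\nu+1}\in L^2(\Omega)$, both sides of (\ref{weakd}) are continuous in $v$ with respect to $\|\cdot\|_0$ (using the Poincar\'e inequality for the right-hand side), so by density $v=u_n$ is admissible. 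This yields
\begin{align*}
\int_\Omega \langle A\nabla^*u_n,\nabla^*u_n\rangle\,dX=\int_\Omega\frac{f_n u_n}{(u_n+\frac{1}{n})^\nu}\,dX\le\int_\Omega f\,u_n^{1-\nu}\,dX,
\end{align*}
where the last inequality uses $f_n\le f$ and $u_n/(u_n+\frac{1}{n})^\nu\le u_n^{1-\nu}$, valid since $0<\nu<1$.

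Next I would apply H\"older's inequality with the conjugate pair $r=\big(\frac{2^*_\lambda}{1-\nu}\big)'$ and $r'=\frac{2^*_\lambda}{1-\nu}$; the point of this particular choice of $r$ is that $(1-\nu)r'=2^*_\lambda$ exactly, so
\begin{align*}
\int_\Omega f\,u_n^{1-\nu}\,dX\le\|f\|_{L^r(\Omega)}\Big(\int_\Omega u_n^{2^*_\lambda}\,dX\Big)^{\frac{1-\nu}{2^*_\lambda}}=\|f\|_{L^r(\Omega)}\,\|u_n\|_{L^{2^*_\lambda}(\Omega)}^{1-\nu}.
\end{align*}
By the embedding Theorem \ref{emb1} there is a constant $C>0$, independent of $n$, with $\|u_n\|_{L^{2^*_\lambda}(\Omega)}\le C\|u_n\|_{H^{1,\lambda}_0(\Omega)}$, where $\|u_n\|_{H^{1,\lambda}_0(\Omega)}^2=\int_\Omega\langle A\nabla^*u_n,\nabla^*u_n\rangle\,dX$ (this being equivalent to $\|\cdot\|$ on $H^{1,\lambda}_0(\Omega)$ by the Poincar\'e inequality). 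Combining the last three displays gives
\begin{align*}
\|u_n\|_{H^{1,\lambda}_0(\Omega)}^2\le C\|f\|_{L^r(\Omega)}\,\|u_n\|_{H^{1,\lambda}_0(\Omega)}^{1-\nu}.
\end{align*}

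Finally, since $0<1-\nu<2$, I would divide by $\|u_n\|_{H^{1,\lambda}_0(\Omega)}^{1-\nu}$ (the case where this vanishes being trivial) to obtain $\|u_n\|_{H^{1,\lambda}_0(\Omega)}^{1+\nu}\le C\|f\|_{L^r(\Omega)}$, that is,
\begin{align*}
\|u_n\|_{H^{1,\lambda}_0(\Omega)}\le\big(C\|f\|_{L^r(\Omega)}\big)^{\frac{1}{1+\nu}},
\end{align*}
a bound independent of $n$, which proves the lemma. I do not expect a genuine obstacle here: the only points needing care are the density justification for using $u_n$ as a test function and bookkeeping the exponents so that H\"older's inequality produces precisely the $L^{2^*_\lambda}$-norm, which is what forces the hypothesis $r=\big(\frac{2^*_\lambda}{1-\nu}\big)'$.
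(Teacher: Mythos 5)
Your proposal is correct and follows essentially the same route as the paper: test the weak formulation with $v=u_n$, use $f_n\le f$ and $u_n/(u_n+\tfrac1n)^\nu\le u_n^{1-\nu}$, apply H\"older with $r'=\frac{2^*_\lambda}{1-\nu}$, invoke the embedding Theorem \ref{emb1}, and absorb the power of the norm (the paper first bounds $\int_\Omega u_n^{2^*_\lambda}$ and then returns to the gradient estimate, which is just a rearrangement of your division step). Your added density justification for admitting $u_n$ as a test function is a harmless refinement the paper leaves implicit.
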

\begin{proof}
 Since $r=(\frac{2^*_\lambda}{1-\nu})'$, we can choose $v=u_n$ in (\ref{weakd}) and using H\"{o}lder inequality, one has 
 \begin{align}\label{f3}
     \int_\Omega\langle A\nabla^*u_n,\nabla^*u_n\rangle dX=\int_\Omega \frac{f_nu_n}{(u_n+\frac{1}{n})^\nu}\leq \int_\Omega fu_n^{1-\nu} dX&\leq \|f\|_{L^r(\Omega)}(\int_\Omega u_n^{(1-\nu)r'} dX)^\frac{1}{r'}\nonumber\\
     &\leq \|f\|_{L^r(\Omega)}(\int_\Omega u_n^{2^*_\lambda} dX)^\frac{1-\nu}{2^*_\lambda}.
 \end{align}
 
By Theorem \ref{emb1} and using the above inequality, we get 

\begin{align}\label{w6}
    \int_\Omega u_n^{2^*_\lambda} dX&\leq C(\int_\Omega\langle A\nabla^*u_n,\nabla^*u_n\rangle dX)^\frac{2^*_\lambda}{2}\leq C (\|f\|_{L^r(\Omega)}(\int_\Omega u_n^{2^*_\lambda} dX)^\frac{1-\nu}{2^*_\lambda})^\frac{2^*_\lambda}{2}.
 \end{align}
 So we have 
 \begin{align}\label{P1}
    \int_\Omega u_n^{2^*_\lambda} dX\leq C\|f\|_{L^r(\Omega)}^\frac{2^*_\lambda}{1+\nu}. 
 \end{align}
 Hence, $\{u_n\}$ is bounded $L^{2^*_\lambda}(\Omega)$. Using (\ref{f3}) and (\ref{P1}), we can conclude $\|u_n\|_{H^{1,\lambda}_0(\Omega)}\leq C\|f\|_{L^r(\Omega)}^\frac{1}{1+\nu}$
where $C$ is independent of $n$ . Hence, $\{u_n\}$ is bounded in $H^{1,\lambda}_0(\Omega)$. 
\end{proof}

\begin{center}
    \section{PROOF OF MAIN RESULTS}\label{sec6}
\end{center}


 \subsection{The case $\nu=1$}
 \textbf{Proof of Theorem \ref{Th1}:}
 \begin{proof}
   Consider the above sequence $\{u_n\}$ and define $u$ as the pointwise limit of the sequence ${\{u_n\}}$. Since $H^{1,\lambda}_0(\Omega)$ is Hilbert space and $\{u_n\}$ is bounded in $H^{1,\lambda}_0(\Omega)$
   so it admits a weakly convergent subsequence. Assume $u_n$ weakly converges to $v$ in $H^{1,\lambda}_0(\Omega)$ and hence $u_n$ converges to $v$ in $L^2(\Omega)$. So $\{u_n\}$ has a subsequence that converges to $v$ pointwise. Consequently, $u=v$. So we may assume that the sequence  $\{u_n\}$  weakly converges to $u$ in $H^{1,\lambda}_0(\Omega)$. Choose $v'\in C^1_0(\Omega)$. By Lemma \ref{lem1}, there exists $C>0$ such that $u\geq u_n\geq C$ a.e in supp(v') and for all $n\in \N$. So $$|\frac{f_nv'}{(u_n+\frac{1}{n})}|\leq \frac{\|v'\|_\infty|f|}{C}\;\mbox{ for all} n\in\N$$
   By Dominated Convergence Theorem, we have 
   \begin{align}\label{DCT}
       \lim_{n\to \infty}\int_\Omega \frac{f_nv'}{(u_n+\frac{1}{n})}dX= \int_\Omega \lim_{n\to \infty} \frac{f_nv'}{(u_n+\frac{1}{n})}dX
       =\int_\Omega \frac{fv'}{u}dX.
   \end{align}
   As $u_n$ is a solution of (\ref{equ}) so from (\ref{weakd}) we get,
 \begin{align*}
     &\int_\Omega \langle A\nabla^*u_n,\nabla v'\rangle dX=\int_\Omega \frac {f_nv'}{(u_n+\frac{1}n)} dX
 \end{align*}
 Take $n\to \infty$ and use (\ref{DCT}) we obtain,
 \begin{align*}
      \int_\Omega \langle A\nabla^*u,\nabla v'\rangle dX=\int_\Omega \frac {fv'}{u} dX 
 \end{align*}
 Hence, $u\in H^{1,\lambda}_0(\Omega)$ is a solution of (\ref{maineq}).\\
 Let $u$ and $v$ be two solutions of (\ref{maineq}). The function $w=(u-v)^+\in H^{1,\lambda}_0(\Omega)$ can be considered as a test function. 
Since $u_n$ and $v_n$ are two solutions of (\ref{maineq}) so we have
\begin{align*}
    \int_\Omega \langle A\nabla^*u,\nabla^* w\rangle dX&=\int_\Omega \frac{fw}{u}dX \\
& \text{and}\\  \int_\Omega \langle A\nabla^*v,\nabla^* w\rangle dX&=\int_\Omega \frac{fw}{v}dX 
\end{align*}
By subtracting one from the other, we get
\begin{align*}
   \int_\Omega \langle A\nabla^*(u-v),\nabla^* w\rangle\,dX &=\int_\Omega \frac{f(v-u)}{uv}w dX\leq 0.
\end{align*}
Which ensures us 
\begin{align*}
    \int_\Omega \langle A\nabla^*w,\nabla^* w\rangle\,dX\leq 0.
\end{align*}
Hence, $w=0$ and so $(u-v)\leq 0$. By interchanging the role of $u$ and $v$, we get $(v-u)\leq 0$. Consequently, $u=v$ a.e in $\Omega$.
\end{proof}

  \textbf{Proof of Theorem \ref{Th2}:}
 \begin{proof}
 $(i)$ Let $k>1$ and define $ S(k)=\{x\in \Omega : u_n(x)\geq k\}$. We can treat the function
 \begin{equation*}
     v(x)= \begin{cases}
             u_n(x)-k & x\in S(k)\\
             o & \text{otherwise}
             \end{cases}
\end{equation*}
as a function in $C^1_0(\Omega)$. So by $(5)$ we have
\begin{align}\label{w1}
    \int_{S(k)} \langle A\nabla^*v,\nabla^* v\rangle\,dX&=\int_{S(k)} \frac{f_nv}{(v+k+\frac{1}{n})}dX
    \leq  \int_{S(k)} fv\,dX
    \leq \|f\|_{L^r(\Omega)} \|v\|_{L^{2^*_\lambda}(\Omega)} |S(k)|^{1-\frac{1}{2^*_\lambda}-\frac{1}{r}} 
\end{align}
where $2^*_\lambda=\frac{2Q}{Q-2}$.
By Theorem \ref{emb1}, there exists $C>0$ such that 
\begin{align}\label{w2}
    \|v\|_{L^{2^*_\lambda}(\Omega)}^2&\leq  C\int_{\Omega} \langle A\nabla^*v,\nabla^* v\rangle dX 
   =C\int_{S(k)} \langle A\nabla^*v,\nabla^* v\rangle\,dX 
   \leq C\|f\|_{L^r(\Omega)} \|v\|_{L^{2^*_\lambda}(\Omega)} |S(k)|^{1-\frac{1}{2^*_\lambda}-\frac{1}{r}}
\end{align}
The last inequality follows from (\ref{w1}). Inequality (\ref{w2}) ensures us 
$$\|v\|_{L^{2^*_\lambda}(\Omega)}\leq  C\|f\|_{L^r(\Omega)} |S(k)|^{1-\frac{1}{2^*_\lambda}-\frac{1}{r}}$$
 Assume $1<k<h$. Using last inequality, we obtain
\begin{align*}
    |S(h)|^\frac{1}{2^*_\lambda}(h-k)&=(\int_{S(h)}(h-k)^{2^*_\lambda}\,dX)^\frac{1}{2^*_\lambda}
    \leq (\int_{S(k)}(v(x))^{2^*_\lambda}\,dX)^\frac{1}{2^*_\lambda}
    \leq \|v\|_{L^{2^*_\lambda}(\Omega)}
    \leq C\|f\|_{L^r(\Omega)} |S(k)|^{1-\frac{1}{2^*_\lambda}-\frac{1}{r}}
\end{align*}
So, $$|S(h)|\leq (\frac{C\|f\|_{L^r(\Omega)}}{(h-k)})^{2^*_\lambda}  |S(k)|^{{2^*_\lambda}(1-\frac{1}{2^*_\lambda}-\frac{1}{r})} \\$$
As $r>\frac{Q}{2}$ we have, $2^*_\lambda({1-\frac{1}{2^*_\lambda}-\frac{1}{r}})>1$.
Let $$d^{2^*_\lambda}=(C\|f\|_{L^r(\Omega)})^{2^*_\lambda}2^\frac{(2^*_\lambda)^2(1-\frac{1}{2^*_\lambda}-\frac{1}{r})}{[{2^*_\lambda}(1-\frac{1}{(2^*_\lambda}-\frac{1}{r})-1]} |S(1)|^{{2^*_\lambda}(1-\frac{1}{2^*_\lambda}-\frac{1}{r})-2}$$
By Theorem \ref{stam} we have $|S(1+d)|=0$. Hence, $u_n(x)\leq 1+d$ a.e in $\Omega$ . We get a positive constant $C$ independent of $n$ such that $u_n\leq C\|f\|_{L^r(\Omega)}$ a.e in $\Omega$ for all $n\in\N$. Hence, $\|u\|_{L^\infty(\Omega)}\leq C\|f\|_{L^r(\Omega)}$\\ 

$(ii)$
If $r=1$ then $s=2^*_\lambda$. Since $u\in H^{1,\lambda}_0(\Omega)$ so by Theorem \ref{emb1}, we have $u\in L^s(\Omega)$.\\
If $1<r<\frac{Q}{2}$ . Choose $\delta>1$ (to be determined later). Consider the function $w=u^{2\delta-1}$. By the density argument, $w$ can be treated as a test function. Put $w$ in (\ref{weakd}), we have
\begin{align*}
    \int_\Omega (2\delta-1)u_n^{(2\delta-2)}\langle A\nabla^*u_n,\nabla^*u_n\rangle dX=\int_\Omega \frac{f_nw}{u_n+\frac{1}{n}}dX
   \leq \int_\Omega fu_n^{2\delta-2 } dX
\end{align*} 
By using H\"{o}lder inequality on the RHS of the above inequality, we get
\begin{equation}\label{w3}
   \int_\Omega\langle A\nabla^*u_n^\delta,\nabla^*u_n^\delta\rangle dX =\int_\Omega \delta^2 u_n^{(2\delta-2)}\langle A\nabla^*u_n,\nabla^*u_n\rangle dX\leq \frac{\delta^2}{(2\delta-1)}\|f\|_{L^r(\Omega)}(\int_\Omega u_n^{(2\delta-2)r'} dX)^\frac{1}{r'} 
\end{equation}
where $\frac{1}{r}+\frac{1}{r'}=1$. By Theorem \ref{emb1}, we have \\
\begin{align}\label{w4}
    \int_\Omega u_n^{2^*_\lambda\delta}
    &\leq C(\int_\Omega \langle A\nabla^*{u_n^\delta},\nabla^*{u_n^\delta}\rangle dX)^\frac{2^*_\lambda}{2}\nonumber\\
    &\leq C\{\frac{\delta^2}{(2\delta-1)}\|f\|_{L^r(\Omega)}(\int_\Omega u_n^{(2\delta-2)r'} dX)^\frac{1}{r'}\}^\frac{2^*_\lambda}{2}, \text{ [by (\ref{w3})]}
\end{align}
 We choose $\delta$ such that $2^*_\lambda\delta=(2\delta-2)r'$ so $\delta=\frac{r(Q-2)}{(Q-2r)}$.
Clearly, $\delta>1$ and $2^*_\lambda\delta=s$ . By using (\ref{w4}), we have 
\begin{align*}
    (\int_\Omega u_n^sdX)^{(1-\frac{2^*_\lambda}{2r'})}\leq C
\end{align*}
 Also, $(1-\frac{2^*_\lambda}{2r'})>0$ as $r<\frac{Q}{2}$. So we get 
$$\int_\Omega u_n^sdX\leq C, \text{   $C>0$ is independent of $n$}.$$
By Dominated Convergence Theorem, we have 
$$\int_\Omega u^sdX\leq C.$$
Hence we are done.
\end{proof}
\subsection{The Case $\nu>1$}
\textbf{Proof of Theorem \ref{Th3}:}
 
 \begin{proof}
 Define $u$ as the pointwise limit of $\{u_n\}$.  By Lemma \ref{L2}, $\{u_n\}$ and $\{u_n^{\frac{\nu+1}{2}}\}$ are bounded in $H^{1,\lambda}_{loc}(\Omega)$ and $H^{1,\lambda}_0(\Omega)$ respectively. So by the similar argument as the proof of Theorem \ref{Th1} we can prove $u\in H^{1,\lambda}_{loc}(\Omega)$ and $u^{\frac{\nu+1}{2}}\in H^{1,\lambda}_0(\Omega)$.\\
 Let $v\in C^1_0(\Omega)$ and $\Omega'=\text{supp}(v)$. Without loss of generality we can assume $u_n$ weakly converges to $u$ in $H^{1,\lambda}(\Omega')$. By Lemma \ref{lem1}, there exists $C>0$ such that $u_n(x)\geq C$ a.e $x\in\Omega'$ and for all $n\in \N$. So, $u\geq C>0$ a.e in $\Omega'$. Also, $$|\frac{f_nv}{(u_n+\frac{1}{n})^\nu}|\leq \frac{\|v\|_\infty|f|}{C^\nu},\mbox{ for all} n\in \N$$
 By the Dominated Convergence Theorem, we have 
   \begin{align}\label{DCT1}
       \lim_{n\to \infty}\int_{\Omega'} \frac{f_nv}{(u_n+\frac{1}{n})^\nu}dX&= \int_{\Omega'} \lim_{k\to \infty} \frac{f_nv}{(u_n+\frac{1}{n})^\nu}dX
       =\int_{\Omega'} \frac{fv}{u^\nu}dX.
   \end{align}
   As $u_n$ is a solution of (\ref{equ}) so
 \begin{align*}
     \int_{\Omega'}\langle A\nabla^*u_n,\nabla v\rangle dX=\int_{\Omega'}\frac{f_nv}{(u_n+\frac{1}{n})^\nu} dX
 \end{align*}
 Take $n\to\infty$ and use (\ref{DCT1}), we get
 \begin{align*}
     \int_{\Omega}\langle A\nabla^*u,\nabla v\rangle dX=\int_{\Omega}\frac{fv}{u^\nu} dX
 \end{align*}
 Hence, $u\in H^{1,\lambda}_{loc}(\Omega)$ is a solution of (\ref{maineq}).
 \end{proof}
 
 \textbf{Proof of Theorem \ref{Th4}:}
\begin{proof}
(i) The same proof of Theorem (\ref{Th2}) will work.\\
(ii) If $r=1$ then $s=\frac{2^*_\lambda(\nu+1)}{2}$. Also, $u^\frac{\nu+1}{2}\in H^{1,\lambda}_0(\Omega)$. By Theorem \ref{emb1}, we have $u\in L^s(\Omega)$.\\
If $1<r<\frac{Q}{2}$. Choose $\delta>\frac{\nu+1}{2}$. By the density argument, $v=u_n^{2\delta-1}$ can be considered a test function. From (\ref{weakd}), we have
$$\int_\Omega\langle A\nabla^*u_n,\nabla^*u_n^{2\delta-1}\rangle\,dX=\int_\Omega \frac{f_nu_n^{2\delta-1}}{(u_n+\frac{1}{n})^\nu}\,dX$$
which gives us
\begin{align}\label{w5}
   \int_\Omega (2\delta-1)u_n^{2\delta-2}\langle A\nabla^*u_n,\nabla^*u_n\rangle dX&\leq \int_\Omega fu_n^{2\delta-\nu-1}dX
    \leq \|f\|_{L^r(\Omega)}(\int_\Omega u_n^{(2\delta-\nu-1)r'}dX)^\frac{1}{r'}
 \end{align}
By Theorem \ref{emb1}, there exists $C>0$ such that 
\begin{align}\label{g1}
    \int_\Omega u_n^{\delta2^*_\lambda} dX&\leq C(\int_\Omega \langle A\nabla^*u_n^\delta,\nabla^*u_n^\delta\rangle dX)^\frac{2^*_\lambda}{2}
    \leq C(\int_\Omega \delta^2 u_n^{2\delta-2} \langle A\nabla^*u_n,\nabla^*u_n\rangle dX)^\frac{2^*_\lambda}{2}
\end{align}
By using (\ref{w5}) and (\ref{g1}), we get
\begin{align*}
     \int_\Omega u_n^{\delta2^*_\lambda} dX&\leq C\{\frac{\delta^2}{(2\delta-1)}\|f\|_L^r(\Omega)\}^\frac{2^*_\lambda}{2}(\int_\Omega u_n^{(2\delta-\nu-1)r'}dX)^\frac{2^*_\lambda}{2r'}
\end{align*}
Choose $\delta$ such that $\delta2^*_\lambda=(2\delta-\nu-1)r'$ then $2^*_\lambda\delta=s$ . As $r<\frac{Q}{2}$ so $1-\frac{2^*_\lambda}{2r'}>0$. we have $\int_\Omega u_n^s dX\leq C$. Hence, by Dominated Convergence Theorem we get $u\in L^s(\Omega)$. 
\end{proof}

\subsection{The Case $\nu<1$}

\textbf{Proof of Theorem \ref{Th5}:}
\begin{proof}
Since $\{u_n\}$ is bounded in $H^{1,\lambda}_0(\Omega)$ so it has a subsequence which converges to u weakly in $H^{1,\lambda}_0(\Omega)$. Without loss of generality we can assume $u_n\rightharpoonup u \text{in}\;H^{1,\lambda}_0(\Omega)$. Let $v\in C^1_0(\Omega)$.  By the Lemma \ref{lem1}, there exists $C>0$ such that $u_n(x)\geq C$ a.e $x\in\text{supp}(v)$ and for all $n\in \N$. So $$|\frac{f_nv}{(u_n+\frac{1}{n})^\nu}|\leq \frac{\|v\|_\infty|f|}{C^\nu}\;\mbox{for all}\;n\in\N$$ By the Dominated Convergence Theorem, we have 
   \begin{align}\label{DCT2}
       \lim_{n\to \infty}\int_\Omega \frac{f_nv}{(u_n+\frac{1}{n})^\nu}dX&= \int_\Omega \lim_{k\to \infty} \frac{f_nv}{(u_n+\frac{1}{n})^\nu}dX
       =\int_\Omega \frac{fv}{u^\nu}dX.
   \end{align}
   As $u_n$ is a solution of (\ref{equ}) so,
\begin{align*}
    \int_\Omega \langle A\nabla^*u_n,\nabla v\rangle dX=\int_\Omega \frac{f_nv}{(u_n+\frac{1}{n})^\nu} dX
\end{align*}
Take $n\to\infty$ and (\ref{DCT2}) we get
$$ \int_\Omega \langle A\nabla^*u,\nabla v\rangle dX=\int_\Omega \frac{fv}{u^\nu} dX$$
Hence, $u\in H^{1,\lambda}_0(\Omega)$ is a solution of (\ref{maineq}) with $\nu<1$. The proof of uniqueness is similar to Theorem \ref{Th1}.
\end{proof}
\textbf{Proof of Theorem \ref{Th6}:}
\begin{proof}
(i) The proof is similar to the proof of Theorem \ref{Th2}.\\
(ii)
If $r=(\frac{2^*_\lambda}{1-\nu})'$ then $s=2^*_\lambda$. By the embedding theorem and (\ref{weakd}), we have 
\begin{align*}
    (\int_\Omega u_n^{2^*_\lambda}dX)^\frac{1}{2^*_\lambda}\leq C(\int_\Omega \langle A\nabla^*u_n,\nabla^*u_n\rangle dX)^\frac{1}{2}
    =C(\int_\Omega \frac{f_nu_n}{(u_n+\frac{1}{n})^\nu}dX)^\frac{1}{2}
    &\leq C(\int_\Omega fu_n^{1-\nu} dX)^\frac{1}{2}\\
    &\leq C \|f\|_{L^r(\Omega)}^\frac{1}{2}(\int_\Omega u_n^{(1-\nu)r'}dX)^\frac{1}{2r'}
    \end{align*}
    Since $r'=\frac{2^*_\lambda}{1-\nu}$ so using the above inequality we get 
    \begin{align*}
     \int_\Omega u_n^{2^*_\lambda} dX&\leq C\|f\|_{L^r(\Omega)}^\frac{2^*_\lambda}{1+\nu}
\end{align*}
 By Dominated Convergence Theorem we have $u\in L^{2^*_\lambda}(\Omega)$.\\
  Let $(\frac{2^*_\lambda}{1-\nu})'< r<\frac{Q}{2}$.
  Choose $\delta>1$ (to be determined later). We can treat the function $v=u_n^{2\delta-1}$ as a test function and put it in (\ref{weakd}), we obtain 
\begin{align}\label{M1}
    \int_\Omega\langle A\nabla^*u_n,\nabla^*u_n^{2\delta-1}\rangle dX=\int_\Omega \frac{f_nu_n^{2\delta-1}}{(u_n+\frac{1}{n})^\nu}dX\leq  \int_\Omega fu_n^{2\delta-\nu-1}dX\leq \|f\|_{L^r(\Omega)}(\int_\Omega u_n^{(2\delta-\nu-1)r'}dX)^\frac{1}{r'}
 \end{align}
 Also,
 \begin{align}\label{M2}
     \int_\Omega\langle A\nabla^*u_n,\nabla^*u_n^{2\delta-1}\rangle dX=\int_\Omega (2\delta-1)u_n^{2\delta-2}\langle A\nabla^*u_n,\nabla^*u_n\rangle dX=\int_\Omega \frac{(2\delta-1)}{\delta^2}\langle A\nabla^*u_n^\delta,\nabla^*u_n^\delta\rangle dX
 \end{align}
 Using (\ref{M1}) and (\ref{M2}) we have 
 $$\int_\Omega \langle A\nabla^*u_n^\delta,\nabla^*u_n^\delta\rangle dX)\leq \frac{\delta^2}{(2\delta-1)}\|f\|_{L^r(\Omega)}(\int_\Omega u_n^{(2\delta-\nu-1)r'}dX)^\frac{1}{r'} $$
By Theorem \ref{emb1}, there exists $C>0$ such that 
\begin{align*}
    \int_\Omega u_n^{\delta2^*_\lambda} dX&\leq C(\int_\Omega \langle A\nabla^*u_n^\delta,\nabla^*u_n^\delta\rangle dX)^\frac{2^*_\lambda}{2}\\
    &\leq C\{\frac{\delta^2}{(2\delta-1)}\|f\|_{L^r(\Omega)\}}^\frac{2^*_\lambda}{2}(\int_\Omega u_n^{(2\delta-\nu-1)r'}dX)^\frac{2^*_\lambda}{2r'}
\end{align*}
Choose $\delta$ such that $\delta2^*_\lambda=(2\delta-\nu-1)r'$ then $2^*_\lambda\delta=s$ . As $(\frac{2^*_\lambda}{1-\nu})'< r<\frac{Q}{2}$ so $\delta>1$ and $\frac{2^*_\lambda}{2r'}<1$.  Hence, we have $\int_\Omega u_n^s dX\leq C$. Hence, by Dominated Convergence Theorem, we get $u\in L^s(\Omega)$.
\end{proof}

\textbf{Proof of Theorem \ref{Th7}:}
\begin{proof}
 Let $\epsilon<\frac{1}{n}$ and $v=(u_n+\epsilon)^{2\delta-1}-\epsilon^{2\delta-1}$ with $\frac{1+\nu}{2}\leq\delta<1$. We can treat $v$ as a function in $C^1_0(\Omega)$.
 Put $v$ in (\ref{weakd}) and we obtain
 \begin{align*}
     \int_\Omega\langle A\nabla^*u_n,\nabla^*u_n\rangle (u_n+\epsilon)^{2\delta-2} dX\leq \frac{1}{(2\delta-1)}\int_\Omega\frac{fv}{(u_n+\frac{1}{n})^\nu}
 \end{align*}
 As $\epsilon<\frac{1}{n}$ so we have
  \begin{align}\label{T3}
      \int_\Omega\langle A\nabla^*u_n,\nabla^*u_n\rangle (u_n+\epsilon)^{2\delta-2} dX\leq \frac{1}{(2\delta-1)}\int_\Omega f(u_n+\epsilon)^{2\delta-1-\nu}\; dX
  \end{align}
 By some simple calculation, we get
 \begin{align*}
     \int_\Omega\langle A\nabla^*v,\nabla^*v\rangle dX\leq \frac{\delta^2}{(2\delta-1)}\int_\Omega f(u_n+\epsilon)^{2\delta-1-\nu} dX 
 \end{align*}
 By Theorem \ref{emb1}, we have
 \begin{align*}
     (\int_\Omega v^{2^*_\lambda} dX)^\frac{2}{2^*_\lambda}\leq \frac{C\delta^2}{(2\delta-1)}\int_\Omega f(u_n+\epsilon)^{2\delta-1-\nu} 
 \end{align*}
 Take $\epsilon\rightarrow 0$ and use Dominated convergence Theorem we have,
 \begin{align}\label{T4}
     (\int_\Omega u_n^{2^*_\lambda\delta})^\frac{2}{2^*_\lambda}&\leq \frac{C\delta^2}{(2\delta-1)}\int_\Omega fu_n^{2\delta-1-\nu}
 \end{align}
 If $r=1$ then choose $\delta=\frac{\nu+1}{2}$ and from the previous inequality we have $\{u_n\}$ is bounded in $L^s(\Omega)$ with $s=\frac{Q(\nu+1)}{(Q-2)}$.\\
 If $r>1$ then choose $\delta$ in such a way that $(2\delta-1-\nu)r'=2^*_\lambda \delta$. Now, applying H\"{o}lder inequality on RHS of (\ref{T4}) we have,
 \begin{align*}
     (\int_\Omega u_n^{2^*_\lambda\delta})^\frac{2}{2^*_\lambda}&\leq \frac{C\delta^2}{(2\delta-1)}\|f\|_{L^r(\Omega)}(\int_\Omega u_n^{(2\delta-1-\nu)r'})^\frac{1}{r'}\\
     &=\frac{C\delta^2}{(2\delta-1)}\|f\|_{L^r(\Omega)}(\int_\Omega u_n^{2^*_\lambda \delta})^\frac{1}{r'}
 \end{align*}
 As $1\leq r<\frac{2Q}{(Q+2)+\nu(Q-2)}<\frac{Q}{2}$ so   $\frac{2}{2^*_\lambda}>\frac{1}{r'}$. Hence, $\{u_n\}$ is bounded in $L^s(\Omega)$ with $s=2^*_\lambda \delta=\frac{Qr(\nu+1)}{(Q-2r)}$. Using H\"{o}lder inequality in (\ref{T3}), we have
\begin{align*}
    \int_\Omega\langle A\nabla^*u_n,\nabla^*u_n\rangle (u_n+\epsilon)^{2\delta-2} dX\leq \frac{1}{(2\delta-1)}\|f\|_{L^r(\Omega)}(\int_\Omega (u_n+\epsilon)^{2^*_\lambda \delta})^\frac{1}{r'}
 \end{align*}
  Since $u_n$ is bounded in $L^s(\Omega)$ so
  $$\int_\Omega\langle A\nabla^*u_n,\nabla^*u_n\rangle (u_n+\epsilon)^{2\delta-2} dX\leq C. $$
 For $q=\frac{Qr(\nu+1)}{Q-r(1-\nu)}$ and above chosen $\delta$ satisfies the condition $(2-2\delta)q=(2-q)s$.\\
 So,
 \begin{align*}
     \int_\Omega \langle A\nabla^*u_n,\nabla^*u_n\rangle^\frac{q}{2} dX&=\int_\Omega \frac{|\sqrt{A}\nabla^*u_n|^q}{(u_n+\epsilon)^{q-q\delta}}(u_n+\epsilon)^{q-\delta q} dX\\
     &\leq(\int_\Omega\frac{|\sqrt{A}\nabla^*u_n|^2}{(u_n+\epsilon)^{2-2\delta}} dX) (\int_\Omega (u_n+\epsilon)^sdX)^{1-\frac{q}{2}}
 \end{align*}
 since $\{u_n\}$ is bounded in $L^s(\Omega)$ and $\epsilon<\frac{1}{n}$ so $\{u_n+\epsilon\}$ is bounded in  $L^s(\Omega)$. Consequently, $\{u_n\}$ is bounded in $W^{1,\lambda,q}_0(\Omega)$. Hence $u\in W^{1,\lambda,q}_0(\Omega)$.
\end{proof}
\begin{center}
    \section{VARIABLE SINGULAR EXPONENT}\label{sec7}
\end{center}

 Consider the equation 
 \begin{align}\label{Var1}
    -\Delta_\lambda u&=\frac{f}{u^{\nu(x)}} \text{ in }\Omega\nonumber\\
   &u>0 \;\text{in}\; \Omega\\
   &u=0 \;\text{on}\; \partial\Omega\nonumber
\end{align}
where $\nu\in C^1(\overline{\Omega})$ is a positive function.
\begin{theorem}
    Let $f\in L^{(2^*_\lambda)'}(\Omega)$ be a function. If there exists $K\Subset \Omega$ such that $0<\nu(x)\leq 1$ in $K^c$ (complement of K) then (\ref{Var1}) has an unique solution in $H_0^{1,\lambda}(\Omega)$ provided $\lambda\geq 1$.
\end{theorem}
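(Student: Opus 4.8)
The plan is to run the approximation scheme of Sections \ref{sec4}--\ref{sec6} with the constant $\nu$ replaced by the function $\nu(x)$, and to isolate the single place where the hypothesis $K\Subset\Omega$ (and, through it, $\lambda\ge1$) is genuinely used. As everywhere in the paper I take $f\ge0$, $f\not\equiv0$, and I abbreviate $\nu^+=\max_{\overline\Omega}\nu$, $\nu^-=\min_{\overline\Omega}\nu>0$ (finite, and the latter positive, since $\nu\in C^1(\overline\Omega)$ is positive on a compact set). For $n\in\N$ I consider
\[
  -\Delta_\lambda u_n=\frac{f_n}{(u_n+\frac1n)^{\nu(x)}}\ \ \text{in }\Omega,\qquad u_n=0\ \ \text{on }\partial\Omega,\qquad f_n:=\min\{f,n\}.
\]
Since $\big|f_n(|w|+\frac1n)^{-\nu(x)}\big|\le n^{1+\nu^+}$, the Schaefer fixed point argument of the first lemma of Section \ref{sec4} produces a unique $u_n\in H^{1,\lambda}_0(\Omega)\cap L^\infty(\Omega)$, nonnegative by the weak maximum principle. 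Because $t\mapsto t^{\nu(x)}$ is increasing for each fixed $x$ (as $\nu(x)>0$), the comparison computations in Lemma \ref{lem1} carry over unchanged, so $\{u_n\}$ is nondecreasing and, by Theorem \ref{smp} applied to $u_1$, for every $\Omega'\Subset\Omega$ there is $C(\Omega')>0$ with $u_n\ge C(\Omega')$ a.e.\ in $\Omega'$ for all $n$; in particular $u_n\ge C(K)>0$ a.e.\ in $K$. This is the only place $\lambda\ge1$ and $\Lambda$-connectedness enter.

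The decisive step is the uniform bound $\|u_n\|_{H^{1,\lambda}_0(\Omega)}\le C$. Testing the equation for $u_n$ with $v=u_n$ and splitting over $K$ and $K^c$,
\begin{align*}
  \int_\Omega\langle A\nabla^*u_n,\nabla^*u_n\rangle\,dX
  &=\int_\Omega\frac{f_nu_n}{(u_n+\frac1n)^{\nu(x)}}\,dX\\
  &\le\int_K\frac{fu_n}{(u_n+\frac1n)^{\nu(x)}}\,dX+\int_{K^c}\frac{fu_n}{(u_n+\frac1n)^{\nu(x)}}\,dX.
\end{align*}
On $K^c$ one has $0<\nu(x)\le1$, so $u_n(u_n+\frac1n)^{-\nu(x)}\le(u_n+\frac1n)^{1-\nu(x)}\le1+(u_n+\frac1n)\le2+u_n$; on $K$, the bound $u_n+\frac1n\ge C(K)$ forces $(u_n+\frac1n)^{\nu(x)}\ge c_K:=\min\{C(K)^{\nu^-},C(K)^{\nu^+}\}>0$, hence $u_n(u_n+\frac1n)^{-\nu(x)}\le u_n/c_K$. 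Applying H\"older with $f\in L^{(2^*_\lambda)'}(\Omega)\subset L^1(\Omega)$ and then the embedding of Theorem \ref{emb1},
\begin{align*}
  \int_\Omega\langle A\nabla^*u_n,\nabla^*u_n\rangle\,dX
  &\le2\|f\|_{L^1(\Omega)}+\Big(1+\tfrac1{c_K}\Big)\|f\|_{L^{(2^*_\lambda)'}(\Omega)}\|u_n\|_{L^{2^*_\lambda}(\Omega)}\\
  &\le C_1+C_2\Big(\int_\Omega\langle A\nabla^*u_n,\nabla^*u_n\rangle\,dX\Big)^{1/2},
\end{align*}
which yields the claim.

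It then remains to pass to the limit and argue uniqueness. The bound plus reflexivity gives a weakly convergent subsequence in $H^{1,\lambda}_0(\Omega)$, whose limit is identified with the pointwise limit $u:=\sup_nu_n$ by monotonicity; thus $u\in H^{1,\lambda}_0(\Omega)$, $u>0$ in $\Omega$, and $u\ge C(\Omega')>0$ on each $\Omega'\Subset\Omega$. For $v\in C^1_0(\Omega)$ with $\Omega'=\operatorname{supp}v$, on $\Omega'$ we have $\big|f_nv(u_n+\frac1n)^{-\nu(x)}\big|\le\|v\|_\infty f/c_{\Omega'}\in L^1(\Omega')$ (with $c_{\Omega'}$ defined as $c_K$ above), so dominated convergence on the right and weak convergence on the left pass the limit through $\int_\Omega\langle A\nabla^*u_n,\nabla v\rangle\,dX=\int_\Omega f_nv(u_n+\frac1n)^{-\nu(x)}\,dX$, showing $u$ solves (\ref{Var1}) in the stated sense. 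For uniqueness, if $u,w$ are two such solutions, test the difference with $(u-w)^+\in H^{1,\lambda}_0(\Omega)$: on $\{u>w\}$ one has $u>w>0$ and $\nu(x)>0$, so $u^{-\nu(x)}-w^{-\nu(x)}\le0$, whence
\[
  \int_\Omega\langle A\nabla^*(u-w)^+,\nabla^*(u-w)^+\rangle\,dX=\int_\Omega f\big(u^{-\nu(x)}-w^{-\nu(x)}\big)(u-w)^+\,dX\le0;
\]
the Poincar\'e inequality then forces $(u-w)^+\equiv0$, and symmetrically $u\equiv w$.

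I expect the main obstacle to be precisely the $H^{1,\lambda}_0$ estimate above: away from $K$ the subcritical singularity $\nu(x)\le1$ makes $fu_n/(u_n+\frac1n)^{\nu(x)}$ controllable by $f(2+u_n)$, but on $K$ the exponent may exceed $1$ and the estimate survives only because $K\Subset\Omega$ supplies the uniform lower bound $u_n\ge C(K)$, which itself rests on the strong maximum principle and hence on $\lambda\ge1$. Everything else is a routine transcription of the constant-exponent arguments, the sole new ingredient being the monotonicity of $t\mapsto t^{\nu(x)}$ for each fixed $x$.
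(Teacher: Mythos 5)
Your proposal is correct and follows essentially the same route as the paper: the same approximation scheme with Lemma \ref{lem1} giving the uniform lower bound on $K$, the same splitting of the energy estimate over $K$ and $K^c$ (your bound $(u_n+\frac1n)^{1-\nu(x)}\le 1+(u_n+\frac1n)$ is just a compressed version of the paper's split into $\{u_n\le1\}$ and $\{u_n\ge1\}$), followed by H\"older, the embedding of Theorem \ref{emb1}, weak convergence plus dominated convergence to pass to the limit, and uniqueness via the test function $(u-w)^+$ exactly as in Theorem \ref{Th1}.
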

\begin{proof}
The same approximation used in the earlier section yields the existence of a strictly positive function $u$, which is the increased limit of the sequence $\{u_n\}\subset H^{1,\lambda}_0(\Omega)\cap L^\infty(\Omega)$. Also, Lemma \ref{lem1} is satisfied. As $K\Subset \Omega$ so by Lemma \ref{lem1}, there exists $C>0$ such that $u_n(x)\geq C$ for a.e $x\in K\;\mbox{and for all}\; n\in\N$. For each $n\in\N$, $u_n$ solves 
\begin{align}\label{var2}
     -\Delta_\lambda u_n&=\frac{f_n}{(u_n+\frac{1}{n})^{\nu(x)}} \text{in}\Omega\nonumber\\
   &u>0 \;\text{in}\; \Omega\\
   &u=0 \;\text{on}\; \partial\Omega\nonumber
\end{align}
 By using  H\"{o}lder inequality and  the embedding theorem, we have 
\begin{align*}
   \int_\Omega \langle A\nabla^*u_n,\nabla^*u_n\rangle dx&=\int_\Omega \frac{f_nu_n}{(u_n+\frac{1}{n})^{\nu(x)}} dx\\
   &=\int_K \frac{f_nu_n}{(u_n+\frac{1}{n})^{\nu(x)}} dx +\int_{\{K^c\cap\Omega\}} \frac{f_nu_n}{(u_n+\frac{1}{n})^{\nu(x)}} dx\\
   &\leq ||\frac{1}{C^{\nu(x)}}||_\infty \int_K fu_n dx+ \int_{\{x\in K^c\cap\Omega: u_n(x)\leq 1\}} f u_n^{1-\nu(x)} dx + \int_{x\in K^c\cap\Omega: u_n(x)\geq 1} f u_n^{1-\nu(x)} dx\\
   &\leq ||\frac{1}{C^{\nu(x)}}||_\infty \int_K fu_n dx+ \int_{\{x\in K^c\cap\Omega: u_n(x)\leq 1\}} f dx + \int_{x\in K^c\cap\Omega: u_n(x)\geq 1} f u_n dx\\
   &\leq ||\frac{1}{C^{\nu(x)}}||_\infty ||f||_{L^{(2^*_\lambda)'}(\Omega)}||u_n||_{L^{2^*_\lambda}}+||f||_{L^1(\Omega)}+ ||f||_{L^{(2^*_\lambda)'}(\Omega)}||u_n||_{L^{2^*_\lambda}(\Omega)}\\
   &\leq C||f||_{L^{(2^*_\lambda)'}(\Omega)}||u_n||_{H^{1,\lambda}_0(\Omega)}+ ||f||_{L^1(\Omega)}
\end{align*}
 We obtained $$||u_n||_{H^{1,\lambda}_0(\Omega)}^2\leq  C||f||_{L^{(2^*_\lambda)'}(\Omega)}||u_n||_{H^{1,\lambda}_0(\Omega)}+ ||f||_{L^1(\Omega)}.$$
Hence, $u_n$ is bounded in $H^{1,\lambda}_0(\Omega)$. Without loss of generality we can assume that $u_n$ weakly converges to $u$ in $H^{1,\lambda}_0(\Omega)$. Let $w\in C^1_c(\Omega)$. Using  Lemma \ref{lem1}, there exists $c>0$ such that $u_n\geq c$ for a.e $x$ in $\text{supp}(w)$.
Since $u_n$ solves (\ref{var2}) so 
\begin{align*}
   \int_\Omega \langle A\nabla^*u_n,\nabla w\rangle dx&=\int_\Omega \frac{f_nw}{(u_n+\frac{1}{n})^{\nu(x)}} dx 
\end{align*}
Taking $n\to\infty$ and using the dominated convergence theorem, we get
$$  \int_\Omega \langle A\nabla^*u,\nabla w\rangle dx=\int_\Omega \frac{fw}{u^{\nu(x)}} dx$$
Hence, $u$ is a solution of (\ref{Var1}). The proof of the uniqueness part is identical to the one given in Theorem \ref{Th1}.
\end{proof}
\begin{theorem}
    Let $u$ be the solution of equation (\ref{Var1}) with $f\in L^r(\Omega)$, $r>\frac{Q}{2}$. Then $u\in L^\infty(\Omega)$, where $Q=(m+1)+\lambda m$.
\end{theorem}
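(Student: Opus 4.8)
The plan is to adapt the Stampacchia-type truncation argument used in the proof of Theorem~\ref{Th2}(i) to the variable-exponent approximating sequence $\{u_n\}\subset H^{1,\lambda}_0(\Omega)\cap L^\infty(\Omega)$ constructed in the previous theorem, which solves (\ref{var2}) and increases pointwise to $u$. Since $u_n\uparrow u$ a.e., it suffices to bound $\|u_n\|_{L^\infty(\Omega)}$ by a constant \emph{independent of $n$}; that bound then passes to the limit and gives $u\in L^\infty(\Omega)$.

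Fix $n$, and for $k>1$ set $S(k)=\{x\in\Omega:\ u_n(x)\ge k\}$. Take $v=(u_n-k)^+$, which lies in $H^{1,\lambda}_0(\Omega)$ and, by the usual density argument, may be used as a test function in the weak formulation of (\ref{var2}). The only place the variable exponent enters is the right-hand side: on $S(k)$ one has $u_n\ge k>1$, hence $(u_n+\frac{1}{n})^{\nu(x)}\ge 1$ because $\nu>0$, so
\[
\frac{f_n\,v}{(u_n+\frac{1}{n})^{\nu(x)}}\le f\,v \qquad \text{on } S(k),
\]
irrespective of whether $\nu(x)\le 1$ or $\nu(x)>1$. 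This is, in effect, the only new ingredient compared with the constant-exponent case, and it is where the (mild) difficulty of the theorem is concentrated: truncating at a level above $1$ neutralises the exponent $\nu(x)$.

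From here I would run the standard chain. Hölder's inequality (with exponents $r$ and $2^*_\lambda$) gives
\[
\int_{S(k)}\langle A\nabla^*v,\nabla^*v\rangle\,dX\le \int_{S(k)} f\,v\,dX\le \|f\|_{L^r(\Omega)}\,\|v\|_{L^{2^*_\lambda}(\Omega)}\,|S(k)|^{1-\frac{1}{2^*_\lambda}-\frac1r},
\]
then Theorem~\ref{emb1} yields $\|v\|_{L^{2^*_\lambda}(\Omega)}\le C\|f\|_{L^r(\Omega)}|S(k)|^{1-\frac{1}{2^*_\lambda}-\frac1r}$, and for $1<k<h$ the elementary estimate $|S(h)|^{1/2^*_\lambda}(h-k)\le \|v\|_{L^{2^*_\lambda}(\Omega)}$ produces
\[
|S(h)|\le \Big(\frac{C\|f\|_{L^r(\Omega)}}{h-k}\Big)^{2^*_\lambda}|S(k)|^{\beta},\qquad \beta:=2^*_\lambda\Big(1-\frac{1}{2^*_\lambda}-\frac1r\Big).
\]
Since $2^*_\lambda=\frac{2Q}{Q-2}$, a direct computation gives $\frac{2^*_\lambda}{2^*_\lambda-2}=\frac{Q}{2}$, hence $\beta>1\iff r>\frac{Q}{2}$, which is exactly our hypothesis. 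Applying Theorem~\ref{stam} with $\phi(k)=|S(k)|$ and $k_0=1$ yields $|S(1+d)|=0$, where $d^{2^*_\lambda}=C\,2^{2^*_\lambda\beta/(\beta-1)}|S(1)|^{\beta-1}$; as $|S(1)|\le|\Omega|$ and $C$ depends only on $\Omega,N,r$, the number $d$ is independent of $n$, so $u_n\le 1+d$ a.e. in $\Omega$ for every $n$.

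Finally, letting $n\to\infty$ along the monotone sequence $u_n\uparrow u$ gives $u\le 1+d$ a.e., i.e. $u\in L^\infty(\Omega)$, with $\|u\|_{L^\infty(\Omega)}\le 1+d$ after tracking the constants. Apart from the observation in the second paragraph, every step reduces verbatim to the proof of Theorem~\ref{Th2}(i), so I do not expect any genuine obstacle.
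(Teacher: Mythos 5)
Your proposal is correct and is exactly the argument the paper has in mind: the paper omits this proof, stating only that it is ``similar to that of Theorem~\ref{Th2},'' and your adaptation of the Stampacchia truncation from Theorem~\ref{Th2}(i) is that argument, with the right key observation that truncating at level $k>1$ makes $(u_n+\tfrac1n)^{\nu(x)}\ge 1$ on $S(k)$ so the variable exponent drops out, and that the resulting bound $u_n\le 1+d$ is uniform in $n$. No gaps.
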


\begin{proof}
   The proof is similar to that of the Theorem \ref{Th2} and is omitted here.
\end{proof}

\nocite{*}
\bibliographystyle{plain}
\bibliography{ref1.bib}

\begin{thebibliography}{10}

\bibitem{JJ}
{\em The Moser Iteration Method and the Regularity Theorem of de Giorgi and
  Nash}, pages 305--337.
\newblock Springer New York, New York, NY, 2007.

\bibitem{BBG}
Mehdi Badra, Kaushik Bal, and Jacques Giacomoni.
\newblock A singular parabolic equation: existence, stabilization.
\newblock {\em J. Differential Equations}, 252(9):5042--5075, 2012.

\bibitem{BG1}
Kaushik Bal and Prashanta Garain.
\newblock Weighted and anisotropic {S}obolev inequality with extremal.
\newblock {\em Manuscripta Math.}, 168(1-2):101--117, 2022.

\bibitem{BG}
Kaushik Bal and Prashanta Garain.
\newblock Weighted anisotropic {S}obolev inequality with extremal and
  associated singular problems.
\newblock {\em Differential Integral Equations}, 36(1-2):59--92, 2023.

\bibitem{BGM}
Kaushik Bal, Prashanta Garain, and Tuhina Mukherjee.
\newblock On an anisotropic {$p$}-{L}aplace equation with variable singular
  exponent.
\newblock {\em Adv. Differential Equations}, 26(11-12):535--562, 2021.

\bibitem{BLO}
Lucio Boccardo and Luigi Orsina.
\newblock Semilinear elliptic equations with singular nonlinearities.
\newblock {\em Calc. Var. Partial Differential Equations}, 37(3-4):363--380,
  2010.

\bibitem{CST}
Annamaria Canino, Berardino Sciunzi, and Alessandro Trombetta.
\newblock Existence and uniqueness for {$p$}-{L}aplace equations involving
  singular nonlinearities.
\newblock {\em NoDEA Nonlinear Differential Equations Appl.}, 23(2):Art. 8, 18,
  2016.

\bibitem{CRT}
M.~G. Crandall, P.~H. Rabinowitz, and L.~Tartar.
\newblock On a {D}irichlet problem with a singular nonlinearity.
\newblock {\em Comm. Partial Differential Equations}, 2(2):193--222, 1977.

\bibitem{FS}
B.~Franchi and R.~Serapioni.
\newblock Pointwise estimates for a class of strongly degenerate elliptic
  operators: a geometrical approach.
\newblock {\em Ann. Scuola Norm. Sup. Pisa Cl. Sci. (4)}, 14(4):527--568
  (1988), 1987.

\bibitem{BF}
Bruno Franchi.
\newblock Weighted {S}obolev-{P}oincar\'{e} inequalities and pointwise
  estimates for a class of degenerate elliptic equations.
\newblock {\em Trans. Amer. Math. Soc.}, 327(1):125--158, 1991.

\bibitem{FL1}
Bruno Franchi and Ermanno Lanconelli.
\newblock H\"{o}lder regularity theorem for a class of linear nonuniformly
  elliptic operators with measurable coefficients.
\newblock {\em Ann. Scuola Norm. Sup. Pisa Cl. Sci. (4)}, 10(4):523--541, 1983.

\bibitem{FL2}
Bruno Franchi and Ermanno Lanconelli.
\newblock Une m\'{e}trique associ\'{e}e \`a une classe d'op\'{e}rateurs
  elliptiques d\'{e}g\'{e}n\'{e}r\'{e}s.
\newblock Number Special Issue, pages 105--114 (1984). 1983.
\newblock Conference on linear partial and pseudodifferential operators
  (Torino, 1982).

\bibitem{FL3}
Bruno Franchi and Ermanno Lanconelli.
\newblock An embedding theorem for {S}obolev spaces related to nonsmooth vector
  fields and {H}arnack inequality.
\newblock {\em Comm. Partial Differential Equations}, 9(13):1237--1264, 1984.

\bibitem{PT}
Prashanta Garain and Tuhina Mukherjee.
\newblock Quasilinear nonlocal elliptic problems with variable singular
  exponent.
\newblock {\em Commun. Pure Appl. Anal.}, 19(11):5059--5075, 2020.

\bibitem{KDS}
David Kinderlehrer and Guido Stampacchia.
\newblock {\em An introduction to variational inequalities and their
  applications}.
\newblock SIAM, 2000.

\bibitem{KAL}
Alessia~E. Kogoj and Ermanno Lanconelli.
\newblock On semilinear {${\Delta}_{\lambda}$}-{L}aplace equation.
\newblock {\em Nonlinear Anal.}, 75(12):4637--4649, 2012.

\bibitem{LM}
A.~C. Lazer and P.~J. McKenna.
\newblock On a singular nonlinear elliptic boundary-value problem.
\newblock {\em Proc. Amer. Math. Soc.}, 111(3):721--730, 1991.

\bibitem{MP}
Dario~D Monticelli and Kevin~R Payne.
\newblock Maximum principles for weak solutions of degenerate elliptic
  equations with a uniformly elliptic direction.
\newblock {\em Journal of Differential Equations}, 247(7):1993--2026, 2009.

\bibitem{MD}
Dario~Daniele Monticelli.
\newblock Maximum principles and the method of moving planes for a class of
  degenerate elliptic linear operators.
\newblock {\em J. Eur. Math. Soc. (JEMS)}, 12(3):611--654, 2010.

\end{thebibliography}
\end{document}